\newtheorem{theorem}{Theorem}
\newtheorem{lemma}[theorem]{Lemma}
\newtheorem{proposition}[theorem]{Proposition}
\newtheorem{example}[theorem]{Example}
\newtheorem{definition}[theorem]{Definition}
\newtheorem{corollary}[theorem]{Corollary}
\numberwithin{equation}{section}
\author{\textsc{Baojin Zhang and Liming Tang\footnote{corresponding author}}\\
\small{School of Mathematical Sciences}\\
\small{Harbin Normal University}\\
\small{150025 Harbin, China}\\
\small{E-mail: limingtang@hrbnu.edu.cn}}
\date{ }
\date{ }
\begin{document}

\thispagestyle{empty}

\noindent{\Large
Nilpotentizers and the Nilpotent Graphs: Structural Insights into Lie Superalgebras}
 \footnote{
The work is supported by  the NSF of Hei Longjiang Province (No. LH2024A014)
}

	\bigskip
	
	 \bigskip

\begin{center}	
	{\bf
		
    Baojin Zhang\footnote{School of Mathematical Sciences, Harbin Normal University, 150025 Harbin, China; \ 2024400038@stu.hrbnu.edu.cn}
   and
Liming Tang\footnote{School of Mathematical Sciences, Harbin Normal University, 150025 Harbin, China; \ limingtang@hrbnu.edu.cn}\footnote{corresponding author}}
\end{center}

 \begin{quotation}
{\small\noindent \textbf{Abstract}:
In this paper, we systematically investigate the nilpotentizer and nilpotent graph for a Lie superalgebra over the field of characteristic $p\neq 2$.
First, we establish some fundamental properties of the nilpotentizer.
Next, we show that the nilpotent graph is one of the isomorphic invariants of Lie superalgebras.
Furthermore, we introduce the nilpotency measure 
which provides a quantitative assessment of nilpotency for a Lie superalgebra.
Finally, we use category theory to establish connections between Lie superalgebras and their nilpotent substructures, based on the construction of the nilpotentizer.

\medskip
 \vspace{0.05cm} \noindent{\textbf{Keywords}}:
Lie superalgebras; nilpotentizers; graph theory.

\medskip

\vspace{0.05cm} \noindent \textbf{Mathematics Subject Classification
2020}: 17B05, 17B30, 05C90, 18A05}
\end{quotation}
 \medskip

\section*{Introduction}
\ \ \
Nilpotency is a fundamental concept in the study of algebraic structures and representation theory.
Numerous researchers have focused on characterizing global nilpotency \cite{GMY1,CMZ,Salamon,Blackburn,Camina}.
In contrast, the detailed research of local properties remain to be explored.
The nilpotentizer in groups and Lie algebras are introduced by \cite{AZ} and \cite{TDIL}, respectively.
For a fixed element in a group or an algebra, the nilpotentizer of the element consists of all elements that generate a nilpotent substructure together with it.
The nilpotentizer for a group or an algebra is the intersection of the nilpotentizers of all  elements in it.

Since the 1980s, the intersection theory of algebra and graph theory has offered profound insights
into characterizing the structural properties of finite groups. In 1981, J. S. Williams \cite{JSW} introduced the prime graph of a finite group and
provided a classification of groups with disconnected prime graphs and established key properties of the connected components of prime graphs.
Subsequently, in 2002, W. M. Kantor and \'{A}. Seress \cite{KWMS} proposed the prime power graph for a finite simple group of Lie type of characteristic $p$ and demonstrated that the most finite simple group of Lie type can be uniquely identified by its prime power graph.
In 2006, A. Abdollahi, S. Akbari, and H. R. Maimani \cite{AAM} introduced the non-commuting graph of groups
and established the connections between the graph properties and the group commutativity.
In 2010, A. Abdollahi and M. Zarrin \cite{AZ} introduced the non-nilpotent graph of groups
and established a direct link between the non-nilpotent graphs and the hypercenter of groups.
Further extensions include the introduction of the solvableizer and the solvable graph by P. Bhowal, D. Nongsiang, and R. Nath in 2020 \cite{BN} which provided a framework for characterizing the solvability of groups through graph theory.
In 2025, D. Towers, I. Guti\'{e}rrez, and I. Fern\'{a}ndez \cite{TDIL} introduced nilpotentizer and nilpotent graph for a Lie algebra
and developed computational frameworks based on GAP and SageMath. In 2025, A.Y. Hummdi, K. Alnefaie, G. Scudo and S. A. Mohammad in \cite{HAS} extended the definitions to nilpotentizer and nilpotent graph to a Lie superalgebra $L$ and investigated some properties  the nilpotent graphs for
the triangular Lie superalgebra $\operatorname{t}(m|n,\mathbb{F}_{q})$ and the special linear Lie superalgebra $\frak{sl}(1|1, \mathbb{F}_{q})$.
 The additional related researches are in the papers \cite{DG,GCM,Zr,DN,NS,BC,KVTC,GKLNS,THu,cmmo}. All these results offer the powerful tools to investigate  the structure of algebras or groups.

 The aim in this paper is to establish the general results about nilpotizier and nilpotent graph for a Lie superalgebra over the field of characteristic $p\neq 2$, which provide effective tools for analyzing such algebraic structures.
For the case of $p=2$, the structure of the Lie superalgebras differs significantly, see \cite{BGLAEA,DLeites,AKALDLIS}.
Main results in this paper are as follows:

\begin{itemize}
\item For a Lie superalgebra $L$ over the complex number field, we have $$D(\operatorname{nil}(L))\subseteq E(L), \text{ for all } D\in \operatorname{Der}(L)$$ where $E(L)$ denotes the set of all Engel elements of $L$ and  $\operatorname{Der}(L)$ denotes the derivation algebra of $L$.


\item For a Lie superalgebra $L$, the nilpotency measure $\mu(L)$ is introduced, which quantifies how close the Lie superalgebra is to being nilpotent.
In particular, if $\mu(L)\neq 0$, then $L$ is not nilpotent.

\item Given any $q \geq 2$, for any $\varepsilon > 0$, there exists a Lie superalgebra $L$ such that
$$\left| \mu(L) - \frac{q}{q+1} \right| < \varepsilon.$$


\item From the view of category theory, a functor $\mathcal{N}$ is introduced, which links Lie superalgebras to their nilpotent substructures.

\item The revisions of Theorem 3.2(2) and Corollary 7.2 in \cite{HAS} are proposed. For details, see Remark (1) in Section \ref{section3} and Theorem \ref{theorem5} in this paper.
\end{itemize}

It is important to note that various results in this paper depend on the characteristic $\operatorname{char} \mathbb{F}$ of the ground field $\mathbb{F}$.
In this paper, we try to elucidate the local properties of a graded algebra and reveal some connections between the local properties and the global properties in supersymmetric settings.
\section{Basic}\label{section2}\

In this section, we revisit the key concepts and properties of  a Lie superalgebra over the field of characteristic $p\neq 2$.
For more concepts, readers may refer to \cite{KVG}.

Throughout this paper, $\mathbb{F}$ denotes a field;
$\operatorname{char} \mathbb{F}$ denotes the characteristic of $\mathbb{F}$; $\mathbb{Z}$ denotes the set of integers;
and $\mathbb{N}$ denotes the set of positive integers;
$\mathbb{Z}_{2} := \mathbb{Z}/2\mathbb{Z}$ denotes the ring of integers modulo 2.
A superalgebra over $\mathbb{F}$ is a $\mathbb{Z}_{2}$-graded $\mathbb{F}$-algebra $A = A_{\bar{0}} \oplus A_{\bar{1}}$
(that is ,if $a\in A_{\alpha}$, $b\in A_{\beta}$, $\alpha,\beta\in \mathbb{Z}_{2}$, then $ab\in A_{\alpha+\beta}$).
A Lie superalgebra is a superalgebra $L = L_{\bar{0}} \oplus L_{\bar{1}}$
with an operation $[\cdot, \cdot]$ satisfying the following axioms:
\begin{enumerate}[label=(\arabic*)]
\item $[x,y]=-(-1)^{|x||y|}[y,x]$, \text{ for } $x,y\in L_{\bar{0}}\cup L_{\bar{1}}$,
\item $[x,[y,z]]=[[x,y],z]+(-1)^{|x||y|}[y,[x,z]]$, \text{ for } $x,y,z\in L_{\bar{0}}\cup L_{\bar{1}}$.
\end{enumerate}
Here $|x|=0$ if $x\in L_{\overline{0}}$ and $|x|=1$ if $x\in L_{\overline{1}}$. If $\operatorname{char} \mathbb{F}=3$,
The Jacobi identity for Lie superalgebras entails, additionally, that
$$[x,[x,x]]=0,\text{ for any }x\in L_{\overline{1}}.$$

A Lie superalgebra is said to be \emph{abelian} if $[x,y]=0$ for all $x,y\in L$.
The \emph{dimension} of a Lie superalgebra $L$ is an ordered pair $(d_{0},d_{1})$,
where $d_{0}=\operatorname{dim}_{\mathbb{F}}L_{\overline{0}}$ and $d_{1}=\operatorname{dim}_{\mathbb{F}}L_{\overline{1}}$.
A $\mathbb{Z}_{2}$-graded subspace $I=I_{\overline{0}}\oplus I_{\overline{1}}$ of $L$ is called a \emph{graded ideal}
if $[x,y]\in I_{\overline{l+k}}$ for all $x\in I_{\overline{l}},y\in L_{\overline{k}}$,
where $\overline{l},\overline{k}\in \mathbb{Z}_{2}$ .
The intersections, sums, and Lie brackets of graded ideals are graded ideals.

The \emph{derived series} of $L$ is defined recursively by
\begin{equation*}
L^{(0)}=L;\ \ L^{(k+1)}=[L^{(k)},L^{(k)}],\ k\in \mathbb{Z}.
\end{equation*}

The \emph{descending central series} is given by
\begin{equation*}
L^{1}=L;\ \ L^{k+1}=[L^{k},L],\ k\in \mathbb{Z}.
\end{equation*}

A Lie superalgebra $L$ is called \emph{solvable} if $L^{(r)}=0$ for some $r\in \mathbb{N}$, and \emph{nilpotent} if $L^{k}=0$ for some $k\in \mathbb{N}$.
 Every nilpotent Lie superalgebra is solvable, but the converse is not true.

For $x,y\in L$, define the \emph{adjoint map} $\operatorname{ad}_{x}:\ L\rightarrow L$ by $\operatorname{ad}_{x}(y)=[x,y]$.
The map $\operatorname{ad}:\ L\rightarrow\ \mathfrak{gl}(L)$, where $\mathfrak{gl}(L)$ is the set of endomorphisms from $L$ to $L$,
which is called the \emph{adjoint representation} of $L$.

The center of $L$ is $Z(L)=\{x\in L|[x,L]=0\}$. Obviously, $Z(L)=Z(L)_{\overline{0}}\oplus Z(L)_{\overline{1}}$,
where $Z(L)_{\overline{i}}=Z(L)\cap L_{\overline{i}},\ \overline{i}\in \mathbb{Z}_{2}$. Define the \emph{upper central series} recursively by
\begin{equation*}
Z_{0}(L)=0,
\end{equation*}
\begin{equation*}
Z_{i+1}(L)=\{x\in L| [x,L]\subseteq Z_{i}(L)\},\ i\geq 0.
\end{equation*}
The union $\bigcup_{i\geq 0}Z_{i}(L)$ is called the \emph{hypercenter}, denoted by $Z^{*}(L)$. For a finite dimensional Lie superalgebra $L$, this series terminates.
Clearly, $Z^{*}(L)$ is a graded ideal of $L$.

\section{The nilpotentizer of a Lie superalgebra}\label{section3}
\ \ \ \ \ \ In this section, we present some properties of nilpotentizers in Lie superalgebras.
Throughout, subalgebra refers to a $\mathbb{Z}_{2}$-graded subalgebra, and $\langle z,x\rangle$ denotes the subalgebra of $L$ generated by $z$ and $x$.
\begin{definition}
Let $L$ be a finite dimensional Lie superalgebra. Then
\begin{enumerate}[label=(\arabic*),font=\textnormal]
\item The nilpotentizer of $z$ in $L$ is defined by:
\begin{equation*}
\operatorname{nil}_{L}(z):= \{x\in L|\langle z,x\rangle\ is\ a\ nilpotent\ subalgebra\ of\ L\}.
\end{equation*}

\item The nilpotentizer of $L$ is defined by:
\begin{equation*}
\operatorname{nil}(L):= \{x\in L|\langle z,x\rangle\ is\ a\ nilpotent\ subalgebra\ of\ L, \ for\ all\ z\in L\}.
\end{equation*}
\end{enumerate}
\end{definition}

Let $E_{ij}$ be an $n\times n$ matrix with $1$ in the $(i,j)$-th entry and $0$ elsewhere.

\begin{example}\label{example 1}
Suppose that $\mathfrak{gl}(1|1)$ is the general linear Lie superalgebra with the basis $\{E_{11},E_{22}|E_{21},E_{12}\}$ and $\operatorname{char} \mathbb{F}=0$. Then
\begin{enumerate}[label=(\arabic*),font=\textnormal]
\item $\operatorname{nil}_{\mathfrak{gl}(1|1)}(E_{11}-E_{22})=\mathfrak{gl}(1|1)_{\overline{0}}$;

\item $\operatorname{nil}_{\mathfrak{gl}(1|1)}(E_{11}+E_{12})=\varnothing$;

\item $\operatorname{nil}(\mathfrak{gl}(1|1))=\varnothing$.
\end{enumerate}
\end{example}
\begin{proof}
(1) Let $z = E_{11} - E_{22}$. Compute the Lie brackets of $z$ with the basis elements:
\begin{align*}
[z, E_{11}] = 0;\
[z, E_{22}] = 0;\
[z, E_{12}] = 2E_{12};\
[z, E_{21}] = -2E_{21}.
\end{align*}

For any $x = a E_{11} + b E_{22}\in \mathfrak{gl}(1|1)_{\overline{0}}$, where $a, b, c, d \in \mathbb{F}$.
The subalgebra $\langle z, x \rangle$ is abelian and hence nilpotent. Therefore, $x\in \operatorname{nil}_{\mathfrak{gl}(1|1)}(z)$.
This shows that $\mathfrak{gl}(1|1)_{\overline{0}}\subseteq \operatorname{nil}_{\mathfrak{gl}(1|1)}(z)$.

Conversely, suppose $x\notin \operatorname{nil}_{\mathfrak{gl}(1|1)}(z)$.
Then $x = a E_{11} + b E_{22} + c E_{12} + d E_{21}$ with $c \neq 0$ or $d \neq 0$.
Consider the subalgebra $\langle z, x \rangle$.
Since $\operatorname{ad}_z$ has eigenvalues $\pm 2$ on the odd part, it is not nilpotent,
implying that $\langle z, x \rangle$ is not a nilpotent subalgebra. Thus, $x\notin \operatorname{nil}_{\mathfrak{gl}(1|1)}(z)$.
In conclusion, $\operatorname{nil}_{\mathfrak{gl}(1|1)}(z) = \mathfrak{gl}(1|1)_{\overline{0}}.$

(2) For any $x \in \mathfrak{gl}(1|1)$, the elements $E_{11}$ and $E_{12}$ are contained in $\langle E_{11}+E_{12}, x \rangle$.
Given that $\operatorname{ad}_{E_{11}}^{n}(E_{12}) = E_{12}$ for all $n \in \mathbb{N}$, so this subalgebra is not nilpotent. Therefore, $\operatorname{nil}_{\mathfrak{gl}(1|1)}(E_{11}+E_{12}) = \varnothing$.
\noindent

(3) By definition, $\operatorname{nil}(\mathfrak{gl}(1|1))=\varnothing$.
\end{proof}

\noindent \textbf{Remark.}
(1) For a Lie superalgebra, the center and $Z^{*}(L)$ are not necessarily contained in its nilpotentizer.
An example is provided in Example \ref{example 1}, where
$x = E_{11} + E_{22}\in Z(\mathfrak{gl}(1|1))\subseteq Z^{*}(\mathfrak{gl}(1|1))$, yet $x\notin \operatorname{nil}(\mathfrak{gl}(1|1))$.
However, in a Lie algebra, the center is contained in its nilpotentizer.

(2) For a Lie superalgebra, an element is not necessarily contained in its nilpotentizer.
An example is provided by  Example \ref{example 1},
$E_{11}+E_{12} \notin \operatorname{nil}_{\mathfrak{gl}(1|1)}(E_{11}+E_{12})$.
In contrast, in a Lie algebra, any element $z$ is always contained in its nilpotentizer, since $[z,z]=0$.

\begin{proposition}\label{lemma4}
Let $L=L_{\overline{0}}\oplus L_{\overline{1}}$ be a finite dimensional Lie superalgebra, and let $J$ be a graded ideal of $L$ and $x,z\in L$. Then
\begin{enumerate}[label=(\arabic*),font=\textnormal]
\item $\operatorname{nil}(L)=\cap_{z\in L}\operatorname{nil}_{L}(z)$;

\item If $L$ is nilpotent, then $\operatorname{nil}(L)=L$;

\item $\operatorname{nil}_{L/J}(z+J)=(\operatorname{nil}_{L}(z)+J)/J$ where $J\subseteq Z^{*}(L)$;

\item $\operatorname{nil}_{L}(z)$ is the union of all maximal nilpotent subalgebras containing $z$.
\end{enumerate}
\end{proposition}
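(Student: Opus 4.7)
Parts (1) and (2) will follow almost immediately from the definitions. For (1), unwinding the definition of $\operatorname{nil}(L)$ shows that $x \in \operatorname{nil}(L)$ if and only if $\langle z,x\rangle$ is nilpotent for every $z\in L$, i.e., if and only if $x\in \operatorname{nil}_L(z)$ for every $z\in L$, giving the intersection formula. For (2), I would invoke the fact that any $\mathbb{Z}_2$-graded subalgebra $M$ of a nilpotent Lie superalgebra $L$ is itself nilpotent, which follows from the elementary inclusion $M^k\subseteq L^k$ proved by induction on $k$; hence $\langle z,x\rangle$ is automatically nilpotent for every pair $z,x\in L$, which forces $\operatorname{nil}(L)=L$.

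Part (3) is the main obstacle, and is where the hypothesis $J\subseteq Z^*(L)$ must be exploited. I plan to let $\pi\colon L\to L/J$ denote the quotient map and observe that $\langle z+J,x+J\rangle = \pi(\langle z,x\rangle) \cong \langle z,x\rangle/(\langle z,x\rangle\cap J)$, so that the problem reduces to showing that $\langle z,x\rangle$ is nilpotent if and only if its image in $L/J$ is nilpotent. The forward direction is clear since quotients of nilpotent algebras are nilpotent. For the converse, the key preliminary lemma I would establish is that for every subalgebra $M$ of $L$ the inclusion $Z^*(L)\cap M\subseteq Z^*(M)$ holds; this is a routine induction on $n$ showing $Z_n(L)\cap M\subseteq Z_n(M)$, using $[y,M]\subseteq [y,L]\cap M$ together with the fact that $M$ is closed under brackets. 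Combined with the standard fact that if $K\subseteq Z^*(M)$ and $M/K$ is nilpotent then $M$ itself is nilpotent (iterate $M^{k+1}\subseteq [Z_i(M),M]\subseteq Z_{i-1}(M)$ from some $k$ onwards), applied with $M=\langle z,x\rangle$ and $K=\langle z,x\rangle\cap J\subseteq Z^*(L)\cap M\subseteq Z^*(M)$, this yields the converse and hence the desired equality $\operatorname{nil}_{L/J}(z+J) = (\operatorname{nil}_L(z)+J)/J$.

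For (4), I would prove both inclusions. The inclusion $\supseteq$ is immediate: if $x$ lies in some nilpotent subalgebra $M$ containing $z$, then $\langle z,x\rangle \subseteq M$ is itself nilpotent by the subalgebra fact used in (2), so $x\in \operatorname{nil}_L(z)$. For the reverse inclusion, given $x\in\operatorname{nil}_L(z)$ the subalgebra $\langle z,x\rangle$ is a nilpotent subalgebra containing $z$; by finite dimensionality of $L$ (or by Zorn's lemma applied to the poset of nilpotent subalgebras containing $\langle z,x\rangle$), it is contained in a maximal nilpotent subalgebra of $L$ containing $z$, so $x$ lies in the asserted union. The edge case where $\langle z\rangle$ itself fails to be nilpotent is handled uniformly, since then no nilpotent subalgebra of $L$ contains $z$ and both sides of the claimed equality are empty.
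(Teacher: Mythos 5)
Your proposal is correct and follows essentially the same route as the paper: definitional unwinding for (1)--(2), reduction of (3) to the fact that $\langle z,x\rangle^{n}\subseteq J\subseteq Z^{*}(L)$ forces $\langle z,x\rangle$ to be nilpotent, and the two inclusions via maximal nilpotent subalgebras for (4). Your explicit lemma $Z^{*}(L)\cap M\subseteq Z^{*}(M)$, proved by the induction $Z_{n}(L)\cap M\subseteq Z_{n}(M)$, together with the central-series iteration, supplies exactly the justification the paper leaves implicit at the words ``it follows that the subalgebra $\langle z,x\rangle$ is nilpotent,'' so it is a welcome filling-in rather than a different argument.
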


\begin{proof}
(1) and (2) are straightforward.

(3) On the one hand, for $x\in \operatorname{nil}_{L}(z)$, the subalgebra $\langle z, x \rangle$ is nilpotent.
Furthermore, $\langle z+J,x+J\rangle=\langle z,x\rangle+J$, it follows that $x+J\in\operatorname{nil}_{L/J}(z+J)$.
On the other hand, suppose $x+J \in \operatorname{nil}_{L/J}(z+J)$.
Then the subalgebra $\langle z+J, x+J \rangle$ is nilpotent in $L/J$,
meaning there exists $n \in \mathbb{N}$ such that $\langle z+J, x+J \rangle^n = 0$ in $L/J$.
This implies that $\langle z, x \rangle^n \subseteq J \subseteq Z^*(L)$,
it follows that the subalgebra $\langle z, x \rangle$ is nilpotent in $L$.
Therefore, $x \in \operatorname{nil}_{L}(z)$, which means $x+J \in (\operatorname{nil}_{L}(z)+J)/J$.

(4) For any $x \in \operatorname{nil}_{L}(z)$,
the subalgebra $\langle z, x \rangle$ is nilpotent and therefore is contained in some maximal nilpotent subalgebra of $L$ that contains $z$.
Now, let $M_{z}$ be a maximal nilpotent subalgebra of $L$ that contains $z$.
For any $y \in M_{z}$, the subalgebra $\langle y, z \rangle$ is contained in $M_{z}$ and is therefore nilpotent. This implies $y \in \operatorname{nil}_{L}(z)$.
We conclude that $M_{z} \subseteq \operatorname{nil}_{L}(z)$.
\end{proof}

Denote by $\mathrm{Der}L$ the derivation algebra of $L$. Let $x\in L$, put
\begin{equation*}
E_{L}(x)=\{y\in L|(\operatorname{ad}_{x})^{n}(y)=0,\text{ for some } n\in \mathbb{N}\}.
\end{equation*}
We say that $y$ is an \emph{Engel element} of $L$ if $y\in E_{L}(x)$ for all $x\in L$.
Let $E(L)$ be the set of all Engel elements of $L$.
Obviously, $E_{L}(x)$ and $E(L)$ are subspaces of $L$.

We recall the the Uniqueness Theorem for Analytic Functions.
Let $\mathbb{C}$ be the field of complex numbers, and let $V$ be a finite dimensional vector space over $\mathbb{C}$.
Consider an analytic function $f$ defined by the power series
$$f(t) = \sum_{k=0}^{\infty} a_{k} t^{k}, \quad (a_{k} \in V),$$
which converges in some neighborhood of $t=0$.
If $f(t)$ vanishes on a subset of $\mathbb{C}$ that has a limit point, then $a_k = 0$ for all $k$.

\begin{theorem}\label{theorem1}
Suppose that $L$ is a finite dimensional Lie superalgebra over $\mathbb{C}$. Then $D(\operatorname{nil}(L))\subseteq E(L)$ for every $D\in \operatorname{Der}(L)$.
\end{theorem}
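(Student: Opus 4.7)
The strategy is to exponentiate $D$ into a one-parameter family of automorphisms of $L$, transport the defining property of $\operatorname{nil}(L)$ along it, and then read off $D(x)$ as the $t$-linear coefficient of the resulting analytic identity.

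Fix $x\in\operatorname{nil}(L)$ and $y\in L$. Assume first that $D$ is even. Since $L$ is finite dimensional, the series $\exp(tD)=\sum_{k\ge 0}\frac{t^k}{k!}D^k$ converges on $L$ for every $t\in\mathbb{C}$, and because $D$ is an even derivation, $\exp(tD)$ is a graded automorphism of $L$. Applying this automorphism to $\bigl\langle \exp(-tD)(y),\,x\bigr\rangle$, which is nilpotent by the definition of $\operatorname{nil}(L)$, yields that
\[
M_t:=\bigl\langle y,\,\exp(tD)(x)\bigr\rangle
\]
is a nilpotent subalgebra of $L$ for every $t\in\mathbb{C}$.

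Since $M_t$ is nilpotent, the restriction of $\operatorname{ad}_y$ to $M_t$ is nilpotent, so $(\operatorname{ad}_y)^{n(t)}\bigl(\exp(tD)(x)\bigr)=0$ for some $n(t)\in\mathbb{N}$. The ascending chain $\ker\operatorname{ad}_y\subseteq\ker(\operatorname{ad}_y)^2\subseteq\cdots$ of subspaces of the finite dimensional $L$ stabilizes at some $N\le\dim L$, so in fact $(\operatorname{ad}_y)^{N}\bigl(\exp(tD)(x)\bigr)=0$ for every $t\in\mathbb{C}$. The $L$-valued analytic function
\[
f(t)\;:=\;(\operatorname{ad}_y)^{N}\bigl(\exp(tD)(x)\bigr)\;=\;\sum_{k\ge 0}\frac{t^k}{k!}\,(\operatorname{ad}_y)^{N}\bigl(D^{k}(x)\bigr)
\]
therefore vanishes identically on $\mathbb{C}$. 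By the Uniqueness Theorem for Analytic Functions recalled immediately before the statement, every coefficient vanishes; the $k=1$ coefficient gives $(\operatorname{ad}_y)^{N}(D(x))=0$, so $D(x)\in E_L(y)$. Since $y$ was arbitrary, $D(x)\in E(L)$.

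The main obstacle is the case of an odd derivation, for which $\exp(tD)$ with an even scalar $t$ does not respect the $\mathbb{Z}_2$-grading and fails to be a super-automorphism, so the transport step above breaks down. The natural remedy is to adjoin a nilpotent odd parameter $\theta$ with $\theta^{2}=0$ and work in $L\otimes_{\mathbb{C}}\mathbb{C}[\theta]$: then $\theta D$ is an even derivation, $\exp(\theta D)=\mathrm{id}+\theta D$ is a genuine super-automorphism, and repeating the argument above and reading off the $\theta$-linear part of the resulting identity yields $(\operatorname{ad}_y)^{N}(D(x))=0$. Decomposing a general $D\in\operatorname{Der}(L)$ as $D=D_{\bar 0}+D_{\bar 1}$ and using that $E(L)$ is a subspace then gives the full statement.
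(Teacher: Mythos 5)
Your even-derivation argument is correct and is a close cousin of the paper's proof: both hinge on the same two ingredients, namely that $\exp(tD)$ is an automorphism of the finite-dimensional $L$ and the Uniqueness Theorem for Analytic Functions. The routes differ in where the analyticity is applied. The paper first proves that $E(L)$ is invariant under every derivation (by composing $\operatorname{Exp}(tD)$ with the quotient map $L\to L/E(L)$ and extracting the $t$-linear coefficient) and then simply observes $\operatorname{nil}(L)\subseteq E(L)$; you instead transport the nilpotency of $\langle\exp(-tD)(y),x\rangle$ to $\langle y,\exp(tD)(x)\rangle$ and apply the uniqueness theorem to $(\operatorname{ad}_y)^N(\exp(tD)(x))$ directly. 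Your version has the advantage of not needing $L/E(L)$ at all (so you never rely on $E(L)$ being a subspace except at the very last step), and your extraction of a uniform exponent $N$ from the stabilizing chain $\ker\operatorname{ad}_y\subseteq\ker(\operatorname{ad}_y)^2\subseteq\cdots$ is a detail the paper leaves implicit. For even $D$ this is a complete proof.

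The gap is in the odd case, and your proposed Grassmann-parameter remedy does not close it. The transport step needs the subalgebra $\bigl\langle \exp(-\theta D)(y),\,x\bigr\rangle=\langle y-\theta D(y),\,x\rangle$ of $L\otimes_{\mathbb{C}}\mathbb{C}[\theta]$ to be nilpotent, but $y-\theta D(y)$ is not an element of $L$, so the hypothesis $x\in\operatorname{nil}(L)$ (which only controls $\langle z,x\rangle$ for $z\in L$) does not apply; you would need $x\in\operatorname{nil}\bigl(L\otimes\mathbb{C}[\theta]\bigr)$, which is not given. Unwinding the lower central series of $\langle y-\theta D(y),x\rangle$, its $\theta$-linear part is spanned by iterated brackets of length $j$ with one entry equal to $D(y)$ and the remaining entries in $\langle x,y\rangle$; for these to vanish for large $j$ one needs $\operatorname{ad}$ of the nilpotent subalgebra $\langle x,y\rangle$ to act nilpotently on the submodule generated by $D(y)$, and $\operatorname{ad}_y$ need not be nilpotent on $L$. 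So the odd case still requires a separate argument (for instance, a direct estimate showing $(\operatorname{ad}_y)^N(D(x))=0$ using that $D$ maps $\ker(\operatorname{ad}_y)^k$ into $\ker(\operatorname{ad}_y)^{k+c}$ for suitable $c$, or reducing to the even derivation $D^2=\tfrac12[D,D]$). It is worth saying that you have correctly identified a real difficulty that the published proof silently ignores: the paper applies $\operatorname{Exp}(tD)$ to an arbitrary $D\in\operatorname{Der}(L)$, but $\exp(tD)$ is an automorphism only when the Leibniz rule iterates without signs, i.e.\ when $D$ is even, so the paper's proof as written also only covers even derivations.
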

\begin{proof}
Firstly, we prove that $E(L)$ is invariant under automorphisms of $L$.
Let $\rho$ be an automorphism of $L$. For any $y\in E(L)$ and any $x\in L$, there exists $n_{x,y}$ such that $(\operatorname{ad}_{x})^{n_{x,y}}(y)=0$,
and $(\operatorname{ad}_{\rho (x)})^{n_{x,y}}(\rho (y))=\rho\circ (ad_{x})^{n_{x,y}}\circ \rho^{-1}\circ \rho (y)=\rho(0)=0$.
Then $\rho(y)\in E(\rho(L))$ and $E(L)$ is invariant under automorphisms of $L$.

Next, we will prove $E(L)$ is invariant under derivations of $L$.
Let $\operatorname{Exp}$ be the exponential map on $\mathrm{Der}L$.
For any $D\in \mathrm{Der}L$, since $L$ is finite dimensional,
$\operatorname{Exp}(tD)=\sum_{k=0}^{\infty}\frac{t^{k}}{k!}D^{k}$, $(t\in \mathbb{C})$ converges.
It follows from the properties of the exponential map that $\operatorname{Exp}(tD)$ is an automorphism.
Therefore, $\operatorname{Exp}(tD)(E(L))\subseteq E(L)$ for all $t\in \mathbb{C}$.
Let $\pi: L\rightarrow L/E(L)$ be the quotient map. Then $\pi(\operatorname{Exp}(tD)(y))=0$ for all $t\in \mathbb{C}$.
This shows that $\pi(\operatorname{Exp}(tD)(y))=\sum_{k=0}^{\infty}\frac{t^{k}}{k!}\pi(D^{k}(y))$ vanishes for infinitely many $t$.
However, $L/E(L)$ is finite dimensional. By the Uniqueness Theorem for Analytic Functions, $\pi(D(y))=0$. Hence $D(y)\in E(L)$.

Finally, let $z\in \operatorname{nil}(L)$. For any $x\in L$, the subalgebra $\langle z,x\rangle$ is nilpotent, which implies $z\in E_{L}(x)$.
Since $x$ is arbitrary, it follows that $z\in E(L)$. We have already shown that $E(L)$ is invariant under any derivation $D$. Therefore,
$D(z)\in E(L)$ for all $z\in \operatorname{nil}(L)$, which means $D(\operatorname{nil}(L))\subseteq E(L)$.
\end{proof}

Let $\langle x_{1},\ldots,x_{n}\rangle_{L_{1}}$ denote the subalgebra of $L_{1}$ generated by $x_{1},\ldots,x_{n}$.
We now present the Extension Theorem for the nilpotentizer.

\begin{theorem}
Let $L$ be a finite dimensional Lie superalgebra and $\mathfrak{g}$ a subalgebra of $L$.
Then we have
$$\operatorname{nil}_{L}(x)\cap \mathfrak{g}=\operatorname{nil}_{\mathfrak{g}}(x),\text{ for any }x\in \mathfrak{g}.$$
\end{theorem}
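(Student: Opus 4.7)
The plan is to prove the two inclusions separately and reduce everything to the observation that the subalgebra generated by $x$ and $y$ does not depend on whether we compute it inside $\mathfrak{g}$ or inside $L$ (so long as both generators lie in $\mathfrak{g}$), and that nilpotency of a $\mathbb{Z}_{2}$-graded subalgebra is an intrinsic property — it is determined by the descending central series of the subalgebra itself, not by the ambient Lie superalgebra.

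First I would record the key lemma: if $x, y \in \mathfrak{g}$, then $\langle x, y\rangle_{\mathfrak{g}} = \langle x, y\rangle_{L}$. This follows because $\mathfrak{g}$ is closed under the bracket, so every iterated bracket of $x$ and $y$ stays in $\mathfrak{g}$; thus the subalgebra of $L$ generated by $\{x,y\}$ is contained in $\mathfrak{g}$ and coincides with the subalgebra of $\mathfrak{g}$ they generate. Consequently, the sequence $(\langle x,y\rangle^{k})_{k\geq 1}$ is the same whether formed in $\mathfrak{g}$ or in $L$, and hence $\langle x,y\rangle$ is nilpotent as a subalgebra of $L$ if and only if it is nilpotent as a subalgebra of $\mathfrak{g}$.

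With this in hand the two inclusions are immediate. For $(\subseteq)$, take $y \in \operatorname{nil}_{L}(x) \cap \mathfrak{g}$; then $y \in \mathfrak{g}$ and $\langle x,y\rangle$ is a nilpotent subalgebra of $L$, so by the lemma it is a nilpotent subalgebra of $\mathfrak{g}$, giving $y \in \operatorname{nil}_{\mathfrak{g}}(x)$. For $(\supseteq)$, take $y \in \operatorname{nil}_{\mathfrak{g}}(x)$; then $y \in \mathfrak{g} \subseteq L$ and $\langle x,y\rangle$ is nilpotent in $\mathfrak{g}$, hence nilpotent in $L$, which places $y$ in $\operatorname{nil}_{L}(x) \cap \mathfrak{g}$.

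There is no real obstacle here; the only subtlety to state cleanly is that the subalgebra $\langle x, y\rangle$ must be understood as the same $\mathbb{Z}_{2}$-graded object in both settings, so I would make a point of writing $\langle x,y\rangle_{\mathfrak{g}}$ and $\langle x,y\rangle_{L}$ explicitly (as the paper has already introduced the notation $\langle x_{1},\ldots,x_{n}\rangle_{L_{1}}$) before identifying them. This keeps the argument short and leaves no ambiguity about which ambient algebra the descending central series is computed in.
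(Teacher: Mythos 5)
Your proposal is correct and follows essentially the same route as the paper: both directions reduce to the identification $\langle x,y\rangle_{\mathfrak{g}}=\langle x,y\rangle_{L}$ together with the fact that nilpotency of the generated subalgebra is intrinsic. The only difference is that you isolate this identification as an explicit lemma, which the paper uses implicitly.
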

\begin{proof}
On the one hand, for any $y\in \operatorname{nil}_{L}(x)\cap \mathfrak{g}$, we have $y\in L$ and the subalgebra $\langle x,y\rangle_{L}$ is nilpotent.
Hence $\langle x,y\rangle_{\mathfrak{g}}=\langle x,y\rangle_{L}$ is also nilpotent, which means $y\in \operatorname{nil}_{\mathfrak{g}}(x)$.

On the other hand, for any $y\in \operatorname{nil}_{\mathfrak{g}}(x)$,
the subalgebra $\langle x,y\rangle_{\mathfrak{g}}$ is nilpotent in $\mathfrak{g}$. Since $\mathfrak{g}\subseteq L$,
it follows that $\langle x,y\rangle_{\mathfrak{g}}$ is a nilpotent subalgebra of $L$, which means $y\in \operatorname{nil}_{L}(x)\cap \mathfrak{g}$.
\end{proof}

Next, we recall the definition of the direct sum of Lie superalgebras.
Let $L = L_{\overline{0}} \oplus L_{\overline{1}}$ and $M = M_{\overline{0}} \oplus M_{\overline{1}}$ be Lie superalgebras over a field $\mathbb{F}$.
The direct sum of $L$ and $M$, denoted by $L \oplus M$, is the Lie superalgebra defined as follows:
\begin{enumerate}[label=(\arabic*)]
    \item  As a vector space, $L \oplus M$ is the direct sum of $L$ and $M$;
    \item $(L \oplus M)_{\alpha} = L_{\alpha} \oplus M_{\alpha}$ for $\alpha \in \mathbb{Z}_{2}$;
    \item For arbitrary $x_{1}, x_{2} \in L$ and $y_{1}, y_{2} \in M$,
    \[
    [x_1 + y_1, x_2 + y_2] = [x_1, x_2]_{L} + [y_1, y_2]_{M},
    \]
    where $[-, -]_{L}$ and $[-, -]_{M}$ denote the super brackets in $L$ and $ M$ respectively, with $[L, M] = \{0\}$.
\end{enumerate}

\begin{lemma}\label{lemma8}
Suppose that $L_{1}$ and $L_{2}$ are two Lie superalgebras. For any
$l_{1},l_{2}\in L_{1}$ and $m_{1},m_{2}\in L_{2}$, the subalgebra $\langle (l_{1},m_{1}),(l_{2},m_{2})\rangle$ is nilpotent
in $L_{1}\oplus L_{2}$ if and only if $\langle l_{1},l_{2}\rangle$ and $\langle m_{1},m_{2}\rangle$ are nilpotent subalgebras of $L_{1}$ and $L_{2}$, respectively.
\end{lemma}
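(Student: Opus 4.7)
The plan is to exploit the fundamental identity of the direct sum: for any $(a,b),(a',b')\in L_{1}\oplus L_{2}$, one has $[(a,b),(a',b')]=([a,a']_{L_{1}},[b,b']_{L_{2}})$, since by construction $[L_{1},L_{2}]=0$. I set $S:=\langle (l_{1},m_{1}),(l_{2},m_{2})\rangle$, $A:=\langle l_{1},l_{2}\rangle$, and $B:=\langle m_{1},m_{2}\rangle$. The whole argument is then a direct-sum decomposition combined with the functoriality of the descending central series under homomorphisms.

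For the direction ``$S$ nilpotent implies $A,B$ nilpotent'', I will use the canonical projections $\pi_{1}:L_{1}\oplus L_{2}\to L_{1}$ and $\pi_{2}:L_{1}\oplus L_{2}\to L_{2}$, which are surjective Lie superalgebra homomorphisms. Restricted to $S$, they send the generators $(l_{i},m_{i})$ to $l_{i}$ and $m_{i}$ respectively, so $\pi_{1}(S)=A$ and $\pi_{2}(S)=B$. Since a homomorphism sends the $k$-th term of the descending central series into the $k$-th term of its image, nilpotency of $S$ transfers immediately to $A$ and $B$.

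For the converse I assume $A^{p}=0$, $B^{q}=0$, set $n:=\max\{p,q\}$, and prove by induction on $k\geq 1$ that $S^{k}\subseteq A^{k}\oplus B^{k}$ as a subspace of $L_{1}\oplus L_{2}$. The base case is immediate: every element of $S$ is a linear combination of iterated supercommutators of the two generators, and the componentwise bracket formula forces each such iterated bracket to have the form $(u,v)$ with $u\in A$, $v\in B$. The inductive step reduces to the one-line calculation $[(x,y),(x',y')]=([x,x'],[y,y'])\in A^{k+1}\oplus B^{k+1}$ whenever $(x,y)\in A^{k}\oplus B^{k}$ and $(x',y')\in A\oplus B$. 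Taking $k=n$ then yields $S^{n}\subseteq A^{n}\oplus B^{n}=0$, as required.

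The only point requiring care, rather than a genuine obstacle, is compatibility with the $\mathbb{Z}_{2}$-grading: by the convention of Section~\ref{section2} every ``subalgebra'' is a $\mathbb{Z}_{2}$-graded subalgebra, whereas the generators $l_{i},m_{i}$ are not assumed homogeneous. I handle this up front by replacing each generator with the pair of its $\overline{0}$- and $\overline{1}$-components; this alters neither $S$, $A$, nor $B$, and the componentwise bracket formula then respects the grading automatically, so the inductive argument above goes through verbatim on homogeneous pieces.
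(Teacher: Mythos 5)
Your proof is correct. The paper itself gives no argument here: it simply asserts that the grading is harmless and cites the Lie algebra analogue (Lemma 5.1 of the Towers--Guti\'errez--Fern\'andez preprint). Your write-up is exactly the standard argument that citation implicitly relies on --- surjective projections for the forward direction, and the componentwise inclusion $S^{k}\subseteq A^{k}\oplus B^{k}$ with $n=\max\{p,q\}$ for the converse --- and your remark that passing to homogeneous components of the generators changes none of $S$, $A$, $B$ correctly disposes of the one point where the super setting could differ from the ungraded one.
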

\begin{proof}
The graded structure of Lie superalgebras does not affect the direct sum decomposition; the result follows immediately by analogy to  Lemma 5.1 in \cite{TDIL}.
\end{proof}

\begin{theorem}\label{theorem4}
Let $L=L_{1}\oplus L_{2}$ be the direct sum of finite dimensional Lie superalgebras. Then the following properties hold:
\begin{enumerate}[label=(\arabic*),font=\textnormal]
\item $\operatorname{nil}(L)=\operatorname{nil}(L_{1})\oplus \operatorname{nil}(L_{2}).$

\item For any $x = (x_{1},x_{2}) \in L$, we have
$$\operatorname{nil}_L(x)=\operatorname{nil}_{L_1}(x_1) \oplus\operatorname{nil}_{L_2}(x_2).$$
\end{enumerate}
\end{theorem}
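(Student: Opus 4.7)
The plan is to prove part (2) first as a direct translation of Lemma~\ref{lemma8}, and then recover part (1) by intersecting over all $z\in L$ via Proposition~\ref{lemma4}(1). Both statements are essentially two-line consequences of what has already been established, so the proof will amount to careful bookkeeping rather than genuinely new work.

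For part (2), I would take an arbitrary $y=(y_1,y_2)\in L$ and apply Lemma~\ref{lemma8} with the substitutions $l_1=x_1,\ l_2=y_1,\ m_1=x_2,\ m_2=y_2$. That lemma states that $\langle (x_1,x_2),(y_1,y_2)\rangle$ is nilpotent in $L_1\oplus L_2$ if and only if $\langle x_1,y_1\rangle$ is nilpotent in $L_1$ and $\langle x_2,y_2\rangle$ is nilpotent in $L_2$. Reading both sides of this equivalence through the definition of the nilpotentizer yields $y\in\operatorname{nil}_L(x)$ iff $y_1\in\operatorname{nil}_{L_1}(x_1)$ and $y_2\in\operatorname{nil}_{L_2}(x_2)$, which is exactly the equality $\operatorname{nil}_L(x)=\operatorname{nil}_{L_1}(x_1)\oplus\operatorname{nil}_{L_2}(x_2)$.

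For part (1), I would combine Proposition~\ref{lemma4}(1) with part (2) to write
\[
\operatorname{nil}(L)=\bigcap_{z\in L}\operatorname{nil}_L(z)=\bigcap_{(z_1,z_2)\in L_1\oplus L_2}\bigl(\operatorname{nil}_{L_1}(z_1)\oplus\operatorname{nil}_{L_2}(z_2)\bigr).
\]
A pair $(y_1,y_2)$ lies in this intersection precisely when $y_1\in\operatorname{nil}_{L_1}(z_1)$ and $y_2\in\operatorname{nil}_{L_2}(z_2)$ hold simultaneously for every choice of $(z_1,z_2)$. Because $z_1$ and $z_2$ vary independently over $L_1$ and $L_2$, this conjunction is equivalent to $y_1\in\bigcap_{z_1\in L_1}\operatorname{nil}_{L_1}(z_1)=\operatorname{nil}(L_1)$ and $y_2\in\operatorname{nil}(L_2)$. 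Hence $\operatorname{nil}(L)=\operatorname{nil}(L_1)\oplus\operatorname{nil}(L_2)$.

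No genuine obstacle arises: everything is driven by the tensor/direct-sum compatibility recorded in Lemma~\ref{lemma8}. The only point that deserves an explicit sentence is the interchange of intersection with the direct-sum decomposition in the last display, which goes through because the indexing parameters $z_1$ and $z_2$ decouple across the two summands of $L$. This independence also shows that part (1) could equivalently be proved directly from Lemma~\ref{lemma8}, bypassing part (2) altogether; I would nonetheless present it as a clean corollary of part (2) to emphasize the hierarchical structure of the nilpotentizer construction.
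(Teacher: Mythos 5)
Your proof is correct and rests on the same key tool as the paper's, namely Lemma~\ref{lemma8}; the only difference is the order of deduction. The paper proves part (1) first via a chain of equivalences (implicitly quantified over all $x=(x_1,x_2)$) and then asserts that part (2) ``follows directly from (1),'' whereas you prove the pointwise statement (2) first and obtain (1) by intersecting over all $z=(z_1,z_2)$ and using that the two parameters decouple. Your ordering is arguably the tighter one: deducing the fixed-$x$ statement (2) from the global statement (1) is not actually immediate, while your direction --- pointwise identity plus Proposition~\ref{lemma4}(1) --- is, so your write-up also repairs the mild quantifier sloppiness in the paper's first equivalence.
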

\begin{proof}
(1) By Lemma \ref{lemma8}, we have the following chain of equivalences:
$$
\begin{aligned}
  &(y_1, y_2) \in \mathrm{nil}(L_1) \oplus \mathrm{nil}(L_2)\\
  \Leftrightarrow &\langle y_1, x_1 \rangle_{L_1} \text{ and } \langle y_2, x_2 \rangle_{L_2} \text{ are nilpotent} , \text{ for any } x_{1}\in L_{1}, x_{2}\in L_{2}\\
  \Leftrightarrow &\langle (y_1, y_2), (x_1, x_2) \rangle_{L_1 \oplus L_2} \text{ is nilpotent} , \text{ for any } x=(x_{1},x_{2})\in L_{1}\oplus L_{2}\\
  \Leftrightarrow &(y_1, y_2) \in \mathrm{nil}(L_1 \oplus L_2).
\end{aligned}
$$
Hence $\operatorname{nil}(L_{1}\oplus L_{2})=\operatorname{nil}(L_{1})\oplus \operatorname{nil}(L_{2}).$

(2) The conclusion follows directly from (1).
\end{proof}
Proceeding by induction, suppose that for a direct sum of finite-dimensional Lie superalgebras
$L=L_{1}\oplus L_{2}\oplus\cdots\oplus L_{n}$, we have
$$\operatorname{nil}(L)=\operatorname{nil}(L_{1})\oplus \operatorname{nil}(L_{2})\oplus\cdots\oplus \operatorname{nil}(L_{n})$$
and for any $x = (x_1,\ldots,x_n) \in L$, we have
$$\operatorname{nil}_L(x)=\operatorname{nil}_{L_1}(x_1) \oplus\cdots\oplus \operatorname{nil}_{L_n}(x_n).$$

Let $L$ be a Lie superalgebra and $L_{1}$ be a subalgebra of $L$.
For $h\in L$, let $\operatorname{ad}_{L_{1}}(h)$ denote the adjoint representation of $h$ in $L_{1}$.
A homomorphism of Lie superalgebras is, by definition,
a linear map that is both a homomorphism of the underlying algebras and preserves the $\mathbb{Z}_{2}$-grading.
In particular, it is a homogeneous map of degree $0$. The same applies to the definitions of isomorphisms and automorphisms.

\begin{theorem}\label{12345}
Suppose that $L_{1},L_{2}$ are two finite dimensional Lie superalgebras with char $\mathbb{F}=0$,
and $\varphi: L_{1}\longrightarrow L_{2}$ is a homomorphism.
Then:
\begin{enumerate}[label=(\arabic*),font=\textnormal]
\item$\varphi(\operatorname{nil}(L_{1}))\subseteq \operatorname{nil}(\varphi(L_{1})).$

\item If $\operatorname{ker} \varphi\subseteq \operatorname{nil}(L_{1})$, then $\varphi(\operatorname{nil}(L_{1}))=\operatorname{nil}(\varphi(L_{1}))$.
\end{enumerate}
\end{theorem}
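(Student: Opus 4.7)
The plan is to treat (1) as a routine homomorphism calculation and to reduce (2) to a super-Engel argument in which the hypothesis $\ker\varphi\subseteq\operatorname{nil}(L_{1})$ supplies precisely the adjoint nilpotency needed on the kernel.

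For (1), I would take $y\in\operatorname{nil}(L_{1})$ and an arbitrary $z\in\varphi(L_{1})$, writing $z=\varphi(x)$ for some $x\in L_{1}$. Since $\langle y,x\rangle$ is nilpotent by definition and $\varphi$ is a graded Lie homomorphism, $\langle\varphi(y),z\rangle=\varphi(\langle y,x\rangle)$ is a homomorphic image of a nilpotent subalgebra, hence nilpotent. Thus $\varphi(y)\in\operatorname{nil}(\varphi(L_{1}))$, and $z$ being arbitrary gives the containment.

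For (2) it suffices, in view of (1), to prove $\operatorname{nil}(\varphi(L_{1}))\subseteq\varphi(\operatorname{nil}(L_{1}))$. I would pick $w\in\operatorname{nil}(\varphi(L_{1}))$, choose any preimage $y\in L_{1}$ with $\varphi(y)=w$, fix an arbitrary $x\in L_{1}$, and consider $N:=\langle y,x\rangle$ together with its graded ideal $K:=N\cap\ker\varphi$. The isomorphism $N/K\cong\langle w,\varphi(x)\rangle$ gives nilpotency of $N/K$ from the assumption on $w$. The key gain from $\ker\varphi\subseteq\operatorname{nil}(L_{1})$ is that every $k\in K$ forms a nilpotent subalgebra with every $n\in N\subseteq L_{1}$; extracting from the nilpotency of $\langle k,n\rangle$ an integer $q$ with $\operatorname{ad}_{n}^{q}(k)=0$, I conclude that for each fixed homogeneous $n\in N$ the restriction $\operatorname{ad}_{n}|_{K}$ is locally nilpotent on the finite-dimensional space $K$, and therefore nilpotent. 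Combined with the nilpotency of the induced action on $N/K$ (which follows from super Engel applied to the nilpotent quotient), the standard linear-algebraic fact that an endomorphism acting nilpotently both on an invariant subspace and on the corresponding quotient is itself nilpotent forces $\operatorname{ad}_{n}|_{N}$ to be nilpotent for every homogeneous $n\in N$. A second application of super Engel, now to $N$, yields that $N$ is nilpotent; since $x$ was arbitrary, $y\in\operatorname{nil}(L_{1})$ and thus $w\in\varphi(\operatorname{nil}(L_{1}))$.

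The main obstacle is the Engel-type combination: one has to be sure that super Engel is available in the form \emph{every homogeneous adjoint nilpotent implies the algebra is nilpotent}, which is where the characteristic-zero hypothesis is consumed, and to verify that the block-triangular argument combining the $K$- and $N/K$-actions survives the $\mathbb{Z}_{2}$-grading. Once that is in hand, the hypothesis $\ker\varphi\subseteq\operatorname{nil}(L_{1})$ is pleasantly economical: it does exactly the work of making each $\operatorname{ad}_{n}|_{K}$ nilpotent for $n\in N$, which is all the argument requires.
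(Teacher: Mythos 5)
Your proof is correct, and part (1) coincides with the paper's argument. For part (2), however, you take a genuinely different route. The paper fixes $h\in\langle x,z\rangle$ and runs a Fitting decomposition with respect to $\operatorname{ad}_{\langle x,z\rangle}(h)$, showing that the Fitting-one component $\operatorname{Im}\bigl((\operatorname{ad}_{\langle x,z\rangle}(h))^{n}\bigr)$ is killed by $\varphi$ (by pushing each vector back through arbitrarily high powers of $\operatorname{ad}(h)$ and using nilpotency of $\operatorname{ad}_{\varphi(h)}$), hence lies in $\ker\varphi\subseteq\operatorname{nil}(L_{1})$, and then uses surjectivity plus nilpotency of $\operatorname{ad}(h)$ on that component to force it to vanish. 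You instead work with the graded ideal $K=N\cap\ker\varphi$ of $N=\langle y,x\rangle$ and the extension $0\to K\to N\to N/K\to 0$: nilpotency of $\operatorname{ad}_{n}$ on $K$ comes directly from $K\subseteq\operatorname{nil}(L_{1})$ (local nilpotency on a finite-dimensional space), nilpotency on $N/K\cong\langle w,\varphi(x)\rangle$ comes from the hypothesis on $w$ (and needs only the descending central series, not Engel), and the block-triangular lemma assembles the two; both proofs then finish with super Engel. Your decomposition is cleaner and more transparent about where the hypothesis $\ker\varphi\subseteq\operatorname{nil}(L_{1})$ enters, and it avoids the paper's somewhat delicate "pull back through $\operatorname{ad}(h)^{s}$ for large $s$" step; the paper's version has the minor advantage of producing an explicit bound $\operatorname{ad}(h)^{n+1}=0$ with $n=\dim\langle x,z\rangle$. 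One cosmetic remark: the characteristic-zero hypothesis is consumed only in the super Engel step (as in the paper), and your restriction to homogeneous $n$ is exactly what that theorem requires, so no further care about the $\mathbb{Z}_{2}$-grading is needed.
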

\begin{proof}

(1) For all $x\in \operatorname{nil}(L_{1})$. By the definition of the nilpotentizer, for any $z\in L_{1}$, the subalgebra $\langle x,z\rangle$ is nilpotent.
Since $\varphi$ is a homomorphism, the image $\langle \varphi(x),\varphi(z)\rangle$ is also a nilpotent subalgebra for any $\varphi(z)\in \varphi(L)$.
Therefore, $\varphi(x)\in \operatorname{nil}(\varphi(L_{1}))$, which proves that $\varphi(\operatorname{nil}(L_{1}))\subseteq \operatorname{nil}(\varphi(L_{1}))$.

(2) By (1), it suffices to prove that $\operatorname{nil}(\varphi(L_{1}))\subseteq \varphi(\operatorname{nil}(L_{1}))$.
Suppose that $\operatorname{ker} \varphi\subseteq \operatorname{nil}(L_{1})$.
For any $\varphi(x)\in \operatorname{nil}(\varphi(L_{1}))$, we have
$$\langle \varphi(x), \varphi(z)\rangle=\varphi(\langle x,z\rangle),\text{ for any }z\in L_{1}$$
which is a nilpotent subalgebra.
To conclude that $x\in \operatorname{nil}(L_{1})$,
we need to show that $\langle x,z\rangle$ is nilpotent.
By Engel's Theorem for Lie superalgebras, it suffices to prove that $\operatorname{ad}_{\langle x,z\rangle}(h)$ is nilpotent for every $h\in \langle x,z\rangle$.

It is obvious that $\langle x,z\rangle$ is a finite dimensional subalgebra. Without loss of generality, assume $\operatorname{dim} \langle x,z\rangle=n$.
For any $h\in \langle x,z\rangle$,
it can be observed that $$\operatorname{ad}_{\langle x,z\rangle}(h)(\operatorname{Im}((\operatorname{ad}_{\langle x,z\rangle}(h))^{n}))=
\operatorname{Im}((\operatorname{ad}_{\langle x,z\rangle}(h))^{n}),$$
and
$$(\operatorname{ad}_{\langle x,z\rangle}(h))^{n}(\operatorname{ker}(\operatorname{ad}_{\langle x,z\rangle}(h))^{n}))=0.$$
By the Fitting decomposition with respect to $\operatorname{ad}_{\langle x,z\rangle}(h)$, we have:

$$\langle x,z\rangle=\operatorname{Im}((\operatorname{ad}_{\langle x,z\rangle}(h))^{n}) \oplus \operatorname{ker}((\operatorname{ad}_{\langle x,z\rangle}(h))^{n}).$$

Let $N= \operatorname{Im}((\operatorname{ad}_{\langle x,z\rangle}(h))^{n})$.
Since $\varphi$ is a homomorphism, for any $r,l \in L$ and $k\in \mathbb{Z}$,
the identity $$\varphi(\operatorname{ad}_{r}^{k}(l))=\operatorname{ad}_{\varphi(r)}^{k}(\varphi(l))$$ holds.
For any $v\in N$, there exists a $v'\in \langle x,z\rangle$ such that $v=(\operatorname{ad}_{\langle x,z\rangle}(h))^{n}(v')$. Thus

$$\varphi(v)=\varphi((\operatorname{ad}_{\langle x,z\rangle}(h))^{n}(v'))=(\operatorname{ad}_{\langle \varphi(x),\varphi(z)\rangle}(\varphi(h))^{n}(\varphi(v')).$$

Since $\varphi(\langle x,z\rangle)$ is nilpotent, then there exists some $s\in \mathbb{N}$ such that
$$\operatorname{ad}_{\varphi(\langle x,z\rangle)}(\varphi(h))^{s}=0.$$
We consider two cases:

(I) If $s\leq n$, $\operatorname{ad}_{\varphi(\langle x,z\rangle)}(\varphi(h))^{n}=0$, so $\varphi(v)=0$.

(II) If $s>n$, since $\operatorname{ad}_{\langle x,z\rangle}(h)(N)=N$, there exists a $v''\in N$ such that
$v=(\operatorname{ad}_{\langle x,z\rangle}(h))^{s}(v'')$.
Then $$\varphi(v)=(\operatorname{ad}_{\langle \varphi(x),\varphi(z)\rangle}(\varphi(h)))^{s}(\varphi(v''))=0.$$

Therefore, $N\subseteq \langle x,z\rangle \cap \operatorname{ker}(\varphi)\subseteq \operatorname{nil}(L_{1})$.
Since $\operatorname{ad}_{N}(h)$ is nilpotent and surjective,
it must be that $\operatorname{ad}_{N}(h)=0$. But, $N= \operatorname{Im}((\operatorname{ad}_{\langle x,z\rangle}(h))^{n})$.
So for any $u\in \langle x,z\rangle$,
there exists $u'= \operatorname{ad}_{\langle x,z\rangle}(h)^{n}(u)$ such that $$\operatorname{ad}_{\langle x,z\rangle}(h)^{n+1}(u)=[h,u']=0.$$
Thus, $\operatorname{ad}_{\langle x,z\rangle}(h)$ is nilpotent.

Due to the arbitrariness of $h\in \langle x,z\rangle$, and by Engel's Theorem for Lie superalgebras,
$\langle x,z\rangle$ is a nilpotent subalgebra. Therefore, $x\in \operatorname{nil}(L_{1})$, which implies $\operatorname{nil}(\varphi(L_{1}))\subseteq \varphi(\operatorname{nil}(L_{1}))$.

In conclusion, $\varphi(\operatorname{nil}(L_{1}))=\operatorname{nil}(\varphi(L_{1}))$.
\end{proof}

\begin{corollary}\label{co}
Suppose that $L=L_{1}\oplus\cdots \oplus L_{n}$ and $M=M_{1}\oplus\cdots\oplus M_{n}$
are two direct sums of finite dimensional Lie superalgebras with char $\mathbb{F}=0$,
and let
$\varphi=\varphi_{1}+ \cdots+ \varphi_{n}: L\longrightarrow M$ be a homomorphism,
where $\varphi_{i}: L_{i}\longrightarrow M_{i}$ $(1\leq i \leq n)$ is a homomorphism and $\varphi_{i}(L_{j})=0$ for $i \neq j$. If $\operatorname{ker}\ \varphi\subseteq \operatorname{nil}(L)$,
then $$\operatorname{nil}(\varphi(L))=\varphi_{1}(\operatorname{nil}(L_{1}))\oplus\cdots\oplus \varphi_{n}(\operatorname{nil}(L_{n})).$$
\end{corollary}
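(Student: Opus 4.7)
The plan is to reduce the statement to an $n$-fold application of Theorem \ref{12345}(2), using the direct-sum formula of Theorem \ref{theorem4} as the bridge. The three ingredients I need are: (a) a direct-sum decomposition of $\varphi(L)$, (b) the inclusion $\ker\varphi_{i}\subseteq\operatorname{nil}(L_{i})$ for each $i$, so that Theorem \ref{12345}(2) is applicable componentwise, and (c) the component formula for $\operatorname{nil}$ on a direct sum.

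First I would observe that because $\varphi_{i}(L_{j})=0$ for $i\neq j$, the image decomposes as
\[
\varphi(L)=\varphi_{1}(L_{1})\oplus\cdots\oplus\varphi_{n}(L_{n}),
\]
realizing $\varphi(L)$ as a direct sum of finite dimensional Lie superalgebras (each $\varphi_{i}(L_{i})$ being a subalgebra of $M_{i}$ and hence graded). Applying Theorem \ref{theorem4}(1) to this direct sum gives
\[
\operatorname{nil}(\varphi(L))=\operatorname{nil}(\varphi_{1}(L_{1}))\oplus\cdots\oplus\operatorname{nil}(\varphi_{n}(L_{n})),
\]
so the corollary is equivalent to establishing $\operatorname{nil}(\varphi_{i}(L_{i}))=\varphi_{i}(\operatorname{nil}(L_{i}))$ for every $i$.

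The pivotal step, and what I expect to be the main obstacle, is verifying the hypothesis $\ker\varphi_{i}\subseteq\operatorname{nil}(L_{i})$ needed to invoke Theorem \ref{12345}(2) on each $\varphi_{i}$. For this I would embed $L_{i}\hookrightarrow L$ via the canonical inclusion $x_{i}\mapsto(0,\ldots,x_{i},\ldots,0)$. If $x_{i}\in\ker\varphi_{i}$, then its image under this embedding lies in $\ker\varphi$, because $\varphi$ acts as $\varphi_{i}$ on the $i$-th slot and as zero on the others. By hypothesis $\ker\varphi\subseteq\operatorname{nil}(L)$, and by Theorem \ref{theorem4}(1) the latter equals $\operatorname{nil}(L_{1})\oplus\cdots\oplus\operatorname{nil}(L_{n})$; reading off the $i$-th component gives $x_{i}\in\operatorname{nil}(L_{i})$. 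Thus $\ker\varphi_{i}\subseteq\operatorname{nil}(L_{i})$.

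With this in hand, Theorem \ref{12345}(2) applied to each $\varphi_{i}\colon L_{i}\to M_{i}$ (using $\operatorname{char}\mathbb{F}=0$) yields $\varphi_{i}(\operatorname{nil}(L_{i}))=\operatorname{nil}(\varphi_{i}(L_{i}))$. Substituting this back into the displayed direct-sum expression for $\operatorname{nil}(\varphi(L))$ gives
\[
\operatorname{nil}(\varphi(L))=\varphi_{1}(\operatorname{nil}(L_{1}))\oplus\cdots\oplus\varphi_{n}(\operatorname{nil}(L_{n})),
\]
as desired. The only routine verifications left aside are that each $\varphi_{i}(L_{i})$ is genuinely a $\mathbb{Z}_{2}$-graded subalgebra of $M_{i}$ (immediate since $\varphi_{i}$ is a homomorphism of Lie superalgebras) and that the sum on the right is indeed direct (forced by the decomposition of $M$).
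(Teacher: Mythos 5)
Your proof is correct, and it uses exactly the same two ingredients as the paper (Theorem \ref{theorem4} and Theorem \ref{12345}(2)), but applied in the opposite order. The paper first applies Theorem \ref{12345}(2) to the \emph{whole} map $\varphi\colon L\to M$ (the hypothesis $\ker\varphi\subseteq\operatorname{nil}(L)$ is literally the hypothesis of that theorem), obtaining $\operatorname{nil}(\varphi(L))=\varphi(\operatorname{nil}(L))$, and only then decomposes $\operatorname{nil}(L)=\operatorname{nil}(L_{1})\oplus\cdots\oplus\operatorname{nil}(L_{n})$ via Theorem \ref{theorem4} and pushes forward componentwise. You instead decompose the image $\varphi(L)=\varphi_{1}(L_{1})\oplus\cdots\oplus\varphi_{n}(L_{n})$ first, apply Theorem \ref{theorem4} there, and then invoke Theorem \ref{12345}(2) on each $\varphi_{i}$ separately; this forces you to verify the extra lemma $\ker\varphi_{i}\subseteq\operatorname{nil}(L_{i})$, which you do correctly via the canonical embedding of $L_{i}$ into $L$ and the componentwise description of $\operatorname{nil}(L)$. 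The paper's ordering is the more economical one precisely because it never needs that componentwise kernel condition, whereas your version isolates it as an explicit (and easy) intermediate fact; both are valid, and both implicitly rely on the $n$-fold extension of Theorem \ref{theorem4} stated after its proof.
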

\begin{proof}
By Theorems \ref{theorem4} and \ref{12345}, we have
$$\operatorname{nil}(\varphi(L))=\varphi(\operatorname{nil}(L))=
\varphi(\operatorname{nil}(L_{1})\oplus \cdots\oplus \operatorname{nil}(L_{n}))=
\varphi_{1}(\operatorname{nil}(L_{1}))\oplus \cdots\oplus \varphi_{n}(\operatorname{nil}(L_{n})).$$
\end{proof}
\section{The nilpotent graph of a Lie superalgebra}\ \ \ \
In this section, we describe the nilpotentizer $\operatorname{nil}(L)$ from the perspectives of graph theory.
A \emph{graph} $\Gamma = (V, E)$ consists of a non-empty vertex set $V$ and an edge set $E$,
where $E$ is a collection of unordered pairs of distinct elements from $V$. If $\{u, v\} \in E$,
then $u$ and $v$ are said to be \emph{adjacent}.
A \emph{subgraph} of $\Gamma$ is a graph $\Gamma' = (V', E')$ with $\varnothing \neq V' \subseteq V$ and $E' \subseteq E$.
A subgraph $\Gamma'$ is called a \emph{clique} if every pair of distinct vertices is adjacent.
In particular, if $\Gamma$ is a clique, then it is called a \emph{complete graph}. The complete graph on $n$ vertices is denoted by $K_{n}$.

A \emph{path} in $\Gamma$ is a sequence of distinct vertices $v_{1}, \dots, v_{k}$ such that $\{v_{i}, v_{i+1}\} \in E$ for all $1 \leq i < k$.
The graph $\Gamma$ is \emph{connected} if there exists a path between any two distinct vertices;
otherwise, it is \emph{disconnected}, and its maximal connected subgraph is called \emph{component};
the number of components is denoted by $\kappa(\Gamma)$.
For more details, see \cite{RDi}.


\begin{definition}\label{defgraphnil}
Let $L$ be a finite dimensional non-nilpotent Lie superalgebra over a field $\mathbb{F}$.
Its nilpotent graph $\mathfrak{G}(L)=(V,E)$ is defined as follows:
\begin{enumerate}[label=(\arabic*),font=\textnormal]
\item The vertex set $V=L\setminus (\operatorname{nil}(L)\cup \{0\})$.

\item For two distinct vertices $x,y\in V$, $\{x,y\}\in E$ if and only if the subalgebra $\langle x,y\rangle$ is nilpotent.
\end{enumerate}
\end{definition}

\begin{definition}\label{defgraphnil2}
Let $L$ be a finite dimensional non-nilpotent Lie superalgebra over a field $\mathbb{F}$.
Its non-nilpotent graph $\mathfrak{G}(L)^{c}=(V,E)$ is defined as follows:
\begin{enumerate}[label=(\arabic*),font=\textnormal]
\item The vertex set $V=L\setminus (\operatorname{nil}(L)\cup \{0\})$.

\item For two distinct vertices $x,y\in V$, $\{x,y\}\in E$ if and only if the subalgebra $\langle x,y\rangle$ is non-nilpotent.
\end{enumerate}
\end{definition}

Obviously, $\mathfrak{G}(L)\cup \mathfrak{G}(L)^{c}$ forms the complete graph.

\begin{example}\label{example}
Let $L=L_{\overline{0}}\oplus L_{\overline{1}}$ be a Lie superalgebra with char $\mathbb{F}=3$, where $L_{\overline{0}}=span\{x\}$ and
$L_{\overline{1}}=span\{y\}$. The Lie bracket is defined by
$$[y,y]=0,\ [x,y]=y.$$
The nilpotent graph and non-nilpotent graph of $L$ are as follows:
\begin{longtable}{m{8cm}<{\centering} m{8cm}<{\centering} m{8cm}<{\centering}}
        \toprule
         nilpotent graph &  non-nilpotent graph \\ \hline
        \endhead
        \bottomrule
        \endfoot
        \bottomrule
        \endlastfoot
       \\         \begin{tikzpicture}
        \fill(0,0)circle(2pt)coordinate(dot);
         \node[above] at (dot){$x$};
         \fill(2,0)circle(2pt)coordinate(dot);
         \node[above] at (dot){$y$};
         \fill(-1,-0.7)circle(2pt)coordinate(dot);
         \node[left] at (dot){$2x$};
         \fill(3,-0.7)circle(2pt)coordinate(dot);
         \node[right] at (dot){$2y$};
         \fill(-1,-1.4)circle(2pt)coordinate(dot);
         \node[below] at (dot){$x+2y$};
         \fill(3,-1.4)circle(2pt)coordinate(dot);
         \node[below] at (dot){$2x+y$};
         \fill(0,-2.1)circle(2pt)coordinate(dot);
         \node[below] at (dot){$x+y$};
         \fill(2,-2.1)circle(2pt)coordinate(dot);
         \node[below] at (dot){$2x+2y$};

         \draw(0,0)--(-1,-0.7);
         \draw(2,0)--(3,-0.7);

        \end{tikzpicture} &
        \begin{tikzpicture}
         \fill(6,0)circle(2pt)coordinate(dot);
         \node[above] at (dot){$x$};
         \fill(8,0)circle(2pt)coordinate(dot);
         \node[above] at (dot){$y$};
         \fill(5,-0.7)circle(2pt)coordinate(dot);
         \node[left] at (dot){$2x$};
         \fill(9,-0.7)circle(2pt)coordinate(dot);
         \node[right] at (dot){$2y$};
         \fill(5,-1.4)circle(2pt)coordinate(dot);
         \node[below] at (dot){$x+2y$};
         \fill(9,-1.4)circle(2pt)coordinate(dot);
         \node[below] at (dot){$2x+y$};
         \fill(6,-2.1)circle(2pt)coordinate(dot);
         \node[below] at (dot){$x+y$};
         \fill(8,-2.1)circle(2pt)coordinate(dot);
         \node[below] at (dot){$2x+2y$};

         \draw(6,0)--(8,0);
         \draw(6,0)--(9,-0.7);
         \draw(6,0)--(5,-1.4);
         \draw(6,0)--(9,-1.4);
         \draw(6,0)--(6,-2.1);
         \draw(6,0)--(8,-2.1);

         \draw(8,0)--(5,-0.7);
         \draw(8,0)--(5,-1.4);
         \draw(8,0)--(9,-1.4);
         \draw(8,0)--(6,-2.1);
         \draw(8,0)--(8,-2.1);

         \draw(5,-0.7)--(9,-0.7);
         \draw(5,-0.7)--(5,-1.4);
         \draw(5,-0.7)--(9,-1.4);
         \draw(5,-0.7)--(6,-2.1);
         \draw(5,-0.7)--(8,-2.1);

         \draw(9,-0.7)--(5,-1.4);
         \draw(9,-0.7)--(6,-2.1);
         \draw(9,-0.7)--(8,-2.1);
         \draw(9,-0.7)--(9,-1.4);

         \draw(5,-1.4)--(9,-1.4);
         \draw(5,-1.4)--(6,-2.1);
         \draw(5,-1.4)--(8,-2.1);

         \draw(9,-1.4)--(6,-2.1);
         \draw(9,-1.4)--(8,-2.1);

         \draw(6,-2.1)--(8,-2.1);
                \end{tikzpicture}
                 \\
\end{longtable}
\end{example}
\begin{proof}
Since $\operatorname{nil}(L)=\varnothing$, by Definitions \ref{defgraphnil} and \ref{defgraphnil2},
the vertex set for the nilpotent graph and non-nilpotent graph are
$$V=\{x, y, 2x,2y,x+2y,2y+x,x+y,2x+2y\}.$$
We now analyze the adjacency condition. For any integer $n> 0$, we compute $\operatorname{ad}_{x}^{n} (y)=y$.
It follows that for any $a,b\in L$, the subalgebra $\langle a,b\rangle$ is nilpotent if and only if
$x\notin \langle a,b\rangle$ or $y\notin \langle a,b\rangle$.
This completes the analysis of this example.
\end{proof}

\begin{proposition}\label{theorem5}
Suppose that $L$ is a finite dimensional non-nilpotent Lie superalgebra.
Then for every finite dimensional nilpotent Lie superalgebra $L_{1}$,
the number of connected components of their nilpotent graphs satisfies $$\kappa(\mathfrak{G}(L\oplus L_{1}))\leq \kappa(\mathfrak{G}(L)).$$
In particular, If $\mathfrak{G}(L)$ is connected, then $\mathfrak{G}(L\oplus L_{1})$ is connected.
\end{proposition}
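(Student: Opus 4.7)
The plan is to use Theorem \ref{theorem4} to describe $\operatorname{nil}(L\oplus L_{1})$, then analyze the projection $\pi:(x,y)\mapsto x$ from $V(\mathfrak{G}(L\oplus L_{1}))$ to $V(\mathfrak{G}(L))$ and lift paths through it. First, since $L_{1}$ is nilpotent, Proposition \ref{lemma4}(2) gives $\operatorname{nil}(L_{1})=L_{1}$, and Theorem \ref{theorem4}(1) yields $\operatorname{nil}(L\oplus L_{1})=\operatorname{nil}(L)\oplus L_{1}$. Hence a pair $(x,y)$ is a vertex of $\mathfrak{G}(L\oplus L_{1})$ precisely when $x\in L\setminus\operatorname{nil}(L)$ (and $(x,y)\neq 0$). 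Applying Lemma \ref{lemma8}, two distinct vertices $(x_{1},y_{1})$ and $(x_{2},y_{2})$ are adjacent in $\mathfrak{G}(L\oplus L_{1})$ iff both $\langle x_{1},x_{2}\rangle_{L}$ and $\langle y_{1},y_{2}\rangle_{L_{1}}$ are nilpotent; the latter is automatic because every subalgebra of the nilpotent $L_{1}$ is nilpotent. Thus adjacency in $\mathfrak{G}(L\oplus L_{1})$ is controlled entirely by the nilpotency of $\langle x_{1},x_{2}\rangle_{L}$, and any edge either fixes the first coordinate or projects to an edge of $\mathfrak{G}(L)$.

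Next, I would establish that for each component $C$ of $\mathfrak{G}(L)$ the fiber $\widetilde{C}=\pi^{-1}(C)$ lies in a single component of $\mathfrak{G}(L\oplus L_{1})$, which immediately yields $\kappa(L\oplus L_{1})\leq\kappa(L)$. Given two vertices $(x_{1},y_{1}),(x_{2},y_{2})\in\widetilde{C}$, if $x_{1}\neq x_{2}$ I choose a path $x_{1}=u_{0},u_{1},\ldots,u_{n}=x_{2}$ in $C$ and lift it to the chain $(x_{1},y_{1}),(u_{1},0),\ldots,(u_{n-1},0),(x_{2},y_{2})$, whose consecutive pairs are adjacent by the criterion above (the first coordinates form edges of $\mathfrak{G}(L)$ and the second coordinates generate subalgebras of $L_{1}$). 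If $x_{1}=x_{2}=x$ admits a neighbor $u'$ in $C$, the detour $(x,y_{1}),(u',0),(x,y_{2})$ links the two vertices. The connectedness assertion is the specialization $\kappa(L)=1$: since $L$ is non-nilpotent and $\operatorname{nil}(L)$ is closed under scalars, the unique component contains more than one vertex, so every vertex has a neighbor and both cases apply, forcing $\mathfrak{G}(L\oplus L_{1})$ to be connected.

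The main obstacle is the boundary case where $C=\{x\}$ is a singleton component: no neighbor is available for the detour and one must directly verify that $\langle x\rangle_{L}$ is nilpotent in order to connect $(x,y_{1})$ and $(x,y_{2})$ via the adjacency criterion from the first step. This case is ruled out in the connectedness statement, and for homogeneous $x$ in $\operatorname{char}\mathbb{F}\neq 2,3$ the cyclic subalgebra $\langle x\rangle_{L}$ is easily shown to be nilpotent by the graded Jacobi identity applied to $[x,[x,x]]$, which handles the remaining situation and completes the inequality.
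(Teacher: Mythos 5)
Your overall strategy is the same as the paper's: use Proposition \ref{lemma4}(2) and Theorem \ref{theorem4} to get $\operatorname{nil}(L\oplus L_{1})=\operatorname{nil}(L)\oplus L_{1}$, then use Lemma \ref{lemma8} to reduce adjacency in $\mathfrak{G}(L\oplus L_{1})$ to adjacency of first coordinates, and lift paths. Up to the point where both vertices project to the same $x$, your argument is correct, and in fact more careful than the paper's own proof, which simply asserts that $(z,w)$ and $(x,y)$ are adjacent if and only if $z$ and $x$ are adjacent --- a claim that is false precisely when $z=x$, since two distinct vertices $(x,y_{1})$ and $(x,y_{2})$ are adjacent only when $\langle x,x\rangle=\langle x\rangle_{L}$ is nilpotent. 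You correctly isolate this as the crux and handle it via a detour through a neighbour whenever one exists.

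The genuine gap is in your treatment of the remaining case, where $x$ is an isolated vertex of $\mathfrak{G}(L)$. Your patch assumes $x$ is homogeneous, so that graded skew-symmetry and the Jacobi identity force $\langle x\rangle_{L}$ to be nilpotent. But the vertex set is $L\setminus(\operatorname{nil}(L)\cup\{0\})$, which contains non-homogeneous elements, and for such elements the graded subalgebra $\langle x\rangle_{L}$ (which contains both homogeneous components of $x$) need not be nilpotent. The paper's own Example \ref{example 1}(2) supplies the obstruction: $x=E_{11}+E_{12}\in\mathfrak{gl}(1|1)$ satisfies $\operatorname{nil}_{\mathfrak{gl}(1|1)}(x)=\varnothing$, so $x$ is an isolated vertex, and $\langle x\rangle=\operatorname{span}\{E_{11},E_{12}\}$ is not nilpotent. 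For such an $x$, every pair $(x,y_{1}),(x,y_{2})$ with $y_{1}\neq y_{2}$ is non-adjacent, and indeed each $(x,y)$ is isolated in $\mathfrak{G}(L\oplus L_{1})$; the fibre over the single component $\{x\}$ therefore contributes $|L_{1}|$ components rather than one. So the inequality $\kappa(L\oplus L_{1})\leq\kappa(L)$ cannot be established by your (or the paper's) argument in this case, and over a finite field it can actually fail. The ``in particular'' clause survives your argument (a connected graph with at least two vertices has no isolated vertex), but the general inequality needs either an additional hypothesis ruling out isolated vertices $x$ with $\langle x\rangle_{L}$ non-nilpotent, or a different idea.
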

\begin{proof}
Since $L_{1}$ is nilpotent, we have $\operatorname{nil}(L_{1})=L_{1}$.
By Theorem \ref{theorem4}, it follows that $$\operatorname{nil}(L\oplus L_{1})=\operatorname{nil}(L)\oplus L_{1}.$$
Now, consider the nilpotent graphs. By Lemma \ref{lemma8},
two vertices $(z,w)$ and $(x,y)$ in $\mathfrak{G}(L\oplus L_{1})$ are adjacent if and only if $z$ and $x$ are adjacent in $\mathfrak{G}(L)$.
This implies that for a path $z \to \cdots \to x$ in
$\mathfrak{G}(L)$ and for any $w, y \in L_{1}$, there exists a path $(z, w) \to \cdots \to (x, y)$ exists in $\mathfrak{G}(L \oplus L_1)$.
Consequently, the number of connected components is preserved, i.e.,
\begin{equation}\label{=1}
\kappa(\mathfrak{G}(L \oplus L_1)) \leq \kappa(\mathfrak{G}(L)).
\end{equation}
\end{proof}

\noindent \textbf{Remark.}
 The equality does not hold in Formula \ref{=1} . For example, take $L$ in Example \ref{example},
 then $\kappa(\mathfrak{G}(L))=2$ and $\kappa(\mathfrak{G}(L\oplus L))=1$ by Theorem 7.1 in \cite{HAS}.

\vspace{1em}
Next, we introduce the nilpotency measure of a Lie superalgebra to characterize its nilpotent structure.
\begin{definition}
Let $L$ be a finite dimensional non-nilpotent Lie superalgebra with char $\mathbb{F}\geq3$ and
$\mathfrak{G}(L)=(V,E)$ be its nilpotent graph.
We define the nilpotency measure $\mu(L)$ of $L$ as:
$$\mu(L) = 1 - \frac{2 |E|}{|V|(|V| - 1)},$$
where $|E|$ and $|V|$ denote the number of elements in $E$ and $V$, respectively.
\end{definition}
Since $|V|\geq 2$, the definition is well-defined.

\begin{example}
The nilpotency measure of the Lie superalgebra in Example\ref{example} is $\frac{13}{14}$.
\end{example}

\begin{proposition}\label{nilpotency measure}
Let $L$ be a finite dimensional non-nilpotent Lie superalgebra with $\operatorname{char}\mathbb{F} \geq 3$.
Then the following properties hold:
\begin{enumerate}[label=(\arabic*),font=\textnormal]
    \item $0 \leq \mu(L) \leq 1$.
    \item $\mu(L) = 0$ if and only if $\langle x,y \rangle$ is nilpotent for all $ x,y\in V$.
    \item $\mu(L) = 1$ if and only if $\langle x, y\rangle$ is non-nilpotent for every pair of distinct vertices $x,y\in V$.
\end{enumerate}
\end{proposition}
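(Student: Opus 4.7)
The plan is to reduce everything to elementary combinatorial bounds on the edge count of the nilpotent graph $\mathfrak{G}(L)=(V,E)$, since the nilpotency measure $\mu(L)=1-\tfrac{2|E|}{|V|(|V|-1)}$ is just a normalized edge-density. The key observation, already stated after the definition, is that $|V|\geq 2$ under the hypotheses (because $L$ is non-nilpotent, so $\mathrm{nil}(L)\subsetneq L$), so the denominator $|V|(|V|-1)$ is positive and $\mu(L)$ is well-defined.

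For part (1), I would note that $\tfrac{|V|(|V|-1)}{2}$ is exactly the number of unordered pairs of distinct vertices in $V$, i.e., the number of edges of the complete graph on $V$. Since $E$ is a subset of this edge set of the complete graph, we have $0\leq |E|\leq \tfrac{|V|(|V|-1)}{2}$, which immediately gives $0\leq \tfrac{2|E|}{|V|(|V|-1)}\leq 1$ and hence $0\leq \mu(L)\leq 1$.

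For parts (2) and (3), I would translate the extremal cases of the inequality back into the graph-theoretic conditions via Definition \ref{defgraphnil}. Specifically, $\mu(L)=0$ is equivalent to $|E|=\tfrac{|V|(|V|-1)}{2}$, i.e., $\mathfrak{G}(L)$ is the complete graph on $V$, which by the adjacency rule in Definition \ref{defgraphnil} means $\langle x,y\rangle$ is nilpotent for every pair of distinct $x,y\in V$. Dually, $\mu(L)=1$ is equivalent to $|E|=0$, i.e., $\mathfrak{G}(L)$ has no edges, which by the same definition says no pair $\{x,y\}$ with $x\neq y$ generates a nilpotent subalgebra; equivalently (using that $\mathfrak{G}(L)\cup\mathfrak{G}(L)^{c}$ is complete), every such pair generates a non-nilpotent subalgebra, so $\mathfrak{G}(L)^{c}$ is complete.

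There is essentially no serious obstacle here; the only subtlety I would flag is making sure the statements are read with distinct vertices in mind (the condition involves pairs $x\neq y$, not a single element versus itself), so that the counting argument against $K_{|V|}$ matches the quantifier in (2) and (3). Once this is noted, the three items follow directly from the definition of $\mu(L)$ and the definition of the edge set $E$.
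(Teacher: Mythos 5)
Your proposal is correct and follows essentially the same route as the paper: bound $|E|$ between $0$ and $\binom{|V|}{2}$ for part (1), and identify the extremal cases $|E|=\binom{|V|}{2}$ and $|E|=0$ with the complete and empty graph, respectively, for parts (2) and (3). The remark about reading the conditions for distinct vertices is a reasonable clarification but does not change the argument.
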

\begin{proof}
(1) Suppose $\mathfrak{G}(L)= (V,E)$ is the nilpotent graph of $L$.
Since $E$ is a subset of the set of all unordered pairs from $V$, we have $0 \leq |E| \leq \binom{|V|}{2} = \frac{|V|(|V| - 1)}{2}$. Therefore,
$$0 \leq \mu(L)=1-\frac{2 |E|}{|V|(|V| - 1)} \leq 1.$$

(2) Assume that for every pair of distinct elements $x$ and $y$ in $V$, the subalgebra $\langle x, y\rangle$ is nilpotent,
then $E$ contains all possible unordered pairs, that is $|E| = \binom{|V|}{2} = \frac{|V|(|V| - 1)}{2}$. Therefore,
$$\mu(L) = 1 - \frac{2 \cdot \frac{|V|(|V| - 1)}{2}}{|V|(|V| - 1)} = 1 - 1 = 0.$$

Conversely, if $\mu(L) = 0$, then $\frac{2 |E|}{|V|(|V| - 1)} = 1$, which implies $|E| = \frac{|V|(|V| - 1)}{2}$,
This meaning that $E$ contains all unordered pairs of distinct vertices from $V$.
Hence, for every pair of distinct elements $x$ and $y$ in $V$, the subalgebra $\langle x, y\rangle$ is nilpotent.

(3) $\mu(L) = 1$ if and only if $|E|=0$, which holds if and only if $\langle x,y \rangle$ is non-nilpotent for all $ x,y\in V$.
\end{proof}

\begin{proposition}\label{invariants}
Let $L_{1}$ and $L_{2}$ be finite dimensional non-nilpotent Lie superalgebras.
If $L_{1}$ and $L_{2}$ are isomorphic as Lie superalgebras, then the following hold:
\begin{enumerate}[label=(\arabic*),font=\textnormal]
\item The nilpotent graphs $\mathfrak{G}(L_{1})$ and $\mathfrak{G}(L_{2})$ are isomorphic as graphs.

\item The non-nilpotent graphs $\mathfrak{G}(L_{1})^{c}$ and $\mathfrak{G}(L_{2})^{c}$ are isomorphic as graphs.
\end{enumerate}
\end{proposition}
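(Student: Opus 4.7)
The plan is to take the given Lie superalgebra isomorphism $\varphi: L_1 \to L_2$ and show that it serves, essentially verbatim, as the underlying vertex map for a graph isomorphism in both (1) and (2). Concretely, there are only two things to check: that $\varphi$ restricts to a bijection between the vertex sets $V_1 = L_1 \setminus (\operatorname{nil}(L_1) \cup \{0\})$ and $V_2 = L_2 \setminus (\operatorname{nil}(L_2) \cup \{0\})$, and that it preserves the adjacency relation used in the respective graphs.

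The first step is to prove $\varphi(\operatorname{nil}(L_1)) = \operatorname{nil}(L_2)$. Rather than invoking Theorem \ref{12345} (which would impose a characteristic hypothesis), the isomorphism case admits a direct characteristic-free argument: for any $x, z \in L_1$, the image $\varphi(\langle x, z \rangle) = \langle \varphi(x), \varphi(z) \rangle$ is isomorphic as a Lie superalgebra to $\langle x, z \rangle$, and nilpotency of a Lie superalgebra is an isomorphism invariant (both the descending central series and its vanishing are preserved by a graded bracket-preserving bijection). Combining this with the surjectivity of $\varphi$, the condition ``$\langle x, z \rangle$ is nilpotent for every $z \in L_1$'' translates to ``$\langle \varphi(x), w \rangle$ is nilpotent for every $w \in L_2$''. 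Together with $\varphi(0) = 0$ and injectivity, this shows that $\varphi$ restricts to a bijection $V_1 \to V_2$.

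For the adjacency step, given distinct $x, y \in V_1$, the chain $\{x, y\} \in E_1 \Leftrightarrow \langle x, y \rangle \text{ is nilpotent} \Leftrightarrow \varphi(\langle x, y \rangle) = \langle \varphi(x), \varphi(y) \rangle \text{ is nilpotent} \Leftrightarrow \{\varphi(x), \varphi(y)\} \in E_2$ establishes (1) via Definition \ref{defgraphnil}. Part (2) is obtained by running the same chain with ``nilpotent'' replaced by ``non-nilpotent'' as in Definition \ref{defgraphnil2}, using the identical vertex-set bijection. I expect no serious obstacle: the work reduces to the two routine facts $\varphi(\langle x, y \rangle) = \langle \varphi(x), \varphi(y) \rangle$ and the isomorphism-invariance of (non-)nilpotency, both immediate from $\varphi$ being a bijective Lie superalgebra homomorphism that preserves the $\mathbb{Z}_2$-grading and the super bracket. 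The only care needed is to state these two preservations explicitly before invoking them.
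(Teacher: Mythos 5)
Your proposal is correct and follows the same overall scheme as the paper: use $\varphi$ itself as the vertex bijection, verify that it carries $\operatorname{nil}(L_1)$ onto $\operatorname{nil}(L_2)$ (hence $V_1$ onto $V_2$), and then check adjacency via $\varphi(\langle x,y\rangle)=\langle\varphi(x),\varphi(y)\rangle$. The one genuine difference is how you justify the nilpotentizer step. The paper cites Theorem \ref{12345}, which is stated under the hypothesis $\operatorname{char}\mathbb{F}=0$, whereas Proposition \ref{invariants} carries no such assumption; strictly speaking the paper's proof imports a hypothesis the statement does not grant. Your replacement --- observing that for an isomorphism $\varphi$ the subalgebra $\langle\varphi(x),\varphi(z)\rangle$ is isomorphic to $\langle x,z\rangle$, that nilpotency of the descending central series is an isomorphism invariant, and that surjectivity converts the quantifier ``for all $z\in L_1$'' into ``for all $w\in L_2$'' --- is characteristic-free and proves exactly the equivalence $x\in\operatorname{nil}(L_1)\iff\varphi(x)\in\operatorname{nil}(L_2)$ that the argument needs. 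This is a mild but real improvement: it makes the proposition valid as stated over any field, at the cost of spelling out two routine preservation facts that the paper leaves implicit. Your two-way chain of equivalences for adjacency (applying the same reasoning to $\varphi^{-1}$ for the converse direction) is also slightly more careful than the paper's one-directional phrasing, and part (2) goes through verbatim with ``nilpotent'' negated.
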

\begin{proof}
(1) Denote the vertex set of $\mathfrak{G}(L_{i})$ by $V_{i} = L_{i} \setminus (\operatorname{nil}(L_{i})\cup \{0\})$, where $i=1,2$.
Let $\varphi: L_{1} \to L_{2}$ be an isomorphism of Lie superalgebras.
By Theorem \ref{12345}, we have

$$x \in \operatorname{nil}(L_{1}) \iff \varphi(x) \in \operatorname{nil}(L_{2}).$$
Therefore, the restriction of $\varphi$ to the set $L_1 \setminus (\operatorname{nil}(L_1)\cup \{0\})$ yields a bijection onto $L_2 \setminus (\operatorname{nil}(L_2)\cup \{0\})$.
Now, consider any two distinct vertices $x, y \in V_1$.
By definition, $x$ and $y$ adjacent in the nilpotent graph $\mathfrak{G}(L_1)$ if and only if the subalgebra $\langle x, y \rangle$ is nilpotent.
Since $\varphi$ is an isomorphism, then the subalgebra $\langle \varphi(x), \varphi(y) \rangle$ is nilpotent,
which in turn means that $\varphi(x)$ and $\varphi(y)$ are adjacent in $\mathfrak{G}(L_2)$.
Hence, $\varphi$ induces a graph isomorphism from $\mathfrak{G}(L_1)$ to $\mathfrak{G}(L_2)$.

(2) An entirely analogous argument applied to the definition of the non-nilpotent graph shows that
 $\varphi$ also induces a graph isomorphism from $\mathfrak{G}(L_1)^c$ to $\mathfrak{G}(L_2)^c$.
\end{proof}

\noindent \textbf{Remark.}
Proposition \ref{invariants} shows that the nilpotent graph and the non-nilpotent graph are isomorphic invariants of Lie superalgebras.
Conversely, the isomorphism of nilpotent graph and the non-nilpotent graph does not imply the isomorphism of Lie superalgebras.

In the following, we present a formula for the nilpotency measure of the direct sum of two Lie superalgebras.
Next, define $\binom{a}{b}$ as the binomial coefficient, and let $$A \times B=\{(\alpha,\beta)| \alpha\in A, \beta\in B\}.$$

\begin{definition}
Suppose that $L$ is a finite-dimensional non-nilpotent Lie superalgebra and $x,y\in L$. Define the map $I_{L}: L\times L\rightarrow \mathbb{Z}_{2}$ by
\[
I_{L}(x, y) = \begin{cases}
1, & \langle x, y \rangle \text{ is nilpotent}, \\
0, & \text{otherwise}.
\end{cases}
\]
\end{definition}

\begin{lemma}\label{leqwer}
Suppose that $L_1$ and $L_2$ are finite-dimensional non-nilpotent Lie superalgebras and $L=L_1\oplus L_2$. Then for any $u = (u_1, u_2), v = (v_1, v_2) \in L$,
we have
\begin{equation}\label{3.1}
I_{L}(u, v) = I_{L_1}(u_1, v_1) \cdot I_{L_2}(u_2, v_2).
\end{equation}
\end{lemma}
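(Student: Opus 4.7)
The plan is to reduce the claim to Lemma \ref{lemma8} by a direct case analysis on the four possible values of the pair $\bigl(I_{L_1}(u_1,v_1),\,I_{L_2}(u_2,v_2)\bigr)\in\mathbb{Z}_2\times\mathbb{Z}_2$. Since the indicator $I_L$ takes only the values $0$ and $1$, to prove (3.1) it is enough to show that $I_L(u,v)=1$ if and only if both factors on the right-hand side equal $1$; the remaining cases will follow because the ring product in $\mathbb{Z}_2$ vanishes as soon as at least one factor vanishes.

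First I would invoke Lemma \ref{lemma8} with $l_1=u_1$, $l_2=v_1$, $m_1=u_2$, $m_2=v_2$, which immediately gives the equivalence
\[
\langle (u_1,u_2),(v_1,v_2)\rangle \text{ is nilpotent in } L_1\oplus L_2
\iff
\langle u_1,v_1\rangle \text{ and } \langle u_2,v_2\rangle \text{ are both nilpotent}.
\]
By the definition of the indicator map $I_{(-)}$, this translates exactly into
$I_L(u,v)=1$ if and only if $I_{L_1}(u_1,v_1)=1$ and $I_{L_2}(u_2,v_2)=1$.

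Next I would compare with the right-hand side of (3.1). The multiplication table of $\mathbb{Z}_2$ shows $a\cdot b=1$ iff $a=b=1$, and $a\cdot b=0$ otherwise. Thus $I_{L_1}(u_1,v_1)\cdot I_{L_2}(u_2,v_2)=1$ precisely when both indicators are $1$, and equals $0$ in every other case. Combining this with the previous equivalence yields (3.1) in all four cases.

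The argument contains no genuine obstacle; it is a bookkeeping consequence of Lemma \ref{lemma8}. The only points that require mild care are: (i) noting that Lemma \ref{lemma8} applies to arbitrary pairs of generators in the two summands, so one does not need $\langle u_i,v_i\rangle$ to be nilpotent a priori; and (ii) being explicit that the product on the right of (3.1) is the ring multiplication in $\mathbb{Z}_2$, so the equality is meaningful even when one of $\langle u_1,v_1\rangle,\,\langle u_2,v_2\rangle$ fails to be nilpotent, since in that case the product is $0$ and Lemma \ref{lemma8} likewise forces $\langle u,v\rangle$ to be non-nilpotent, giving $I_L(u,v)=0$ as well.
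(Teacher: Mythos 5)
Your proposal is correct and follows exactly the same route as the paper: both reduce the identity to Lemma \ref{lemma8}, which gives that $\langle u,v\rangle$ is nilpotent if and only if both $\langle u_1,v_1\rangle$ and $\langle u_2,v_2\rangle$ are, and then the indicator identity is immediate. Your added case analysis on the values of the product is just a more explicit spelling-out of the bookkeeping the paper leaves implicit.
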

\begin{proof}
By Lemma \ref{lemma8}, $\langle u, v \rangle$
is nilpotent if and only if both $\langle u_1, v_1 \rangle$ and $\langle u_2, v_2 \rangle$ are nilpotent.
Hence equation \ref{3.1} is true.
\end{proof}

Let $L_1$ and $L_2$ be finite-dimensional non-nilpotent Lie superalgebras.
Set $A_i = L_i \setminus (\operatorname{nil}(L_i)\cup \{0\})$ and $a_i = |A_i|$, $B_i = (\operatorname{nil}(L_i)\cup \{0\})$ and $b_i = |B_i|$.
Define $\alpha_i = 1 - \mu(L_i)$, $\omega_{i}=\{ x \in A_i \mid \langle x \rangle \text{ is nilpotent} \}$
and $\nu_i = \dfrac{ |\omega_{i}| }{a_i}$, where $i=1,2.$

\begin{proposition}
Let $L_1$ and $L_2$ be finite-dimensional non-nilpotent Lie superalgebras with $\operatorname{char}\mathbb{F} \geq 3$.
Then the nilpotency measure of $L = L_1 \oplus L_2$ is
\[
\mu(L) = 1-\frac{2Q}{|V|(|V|-1)},
\]
where
\begin{align*}
|V| &= a_1 a_2 + a_1 b_2 + b_1 a_2, \\
Q &=Q_{11}+Q_{22}+Q_{33}+Q_{12}+Q_{13}+Q_{23}\\
&=\frac{1}{2} a_1 a_2 \Bigl( \alpha_1 \alpha_2 (a_1-1)(a_2-1) + \nu_1 \alpha_2 (a_2-1) + \alpha_1 \nu_2 (a_1-1) \Bigr) \\
+& \frac{1}{2}a_1 b_2 \bigl( \alpha_1 b_2 (a_1-1) + \nu_1 (b_2-1) \bigr) \\
+& \frac{1}{2}a_2 b_1 \bigl( \alpha_2 b_1 (a_2-1) + \nu_2 (b_1-1) \bigr) \\
+& a_1 a_2 b_2 \bigl( \alpha_1 (a_1-1) + \nu_1 \bigr) \\
+& a_1 a_2 b_1 \bigl( \alpha_2 (a_2-1) + \nu_2 \bigr) \\
+& a_1 a_2 b_1 b_2.
\end{align*}
In particular, if $B_i = \varnothing$, then
\[
\mu(L) = \frac{ 2- a_1 a_2 \Bigl( \alpha_1 \alpha_2 (a_1-1)(a_2-1) + \nu_1 \alpha_2 (a_2-1) + \alpha_1 \nu_2 (a_1-1) \Bigr) }{ 2a_1 a_2 (a_1 a_2 - 1) }.
\]
\end{proposition}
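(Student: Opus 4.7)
The plan is to compute $|V|$ and $|E|$ for the nilpotent graph $\mathfrak{G}(L)$ of $L=L_1\oplus L_2$ and substitute into the definition $\mu(L)=1-2|E|/(|V|(|V|-1))$. The structural input comes from Theorem \ref{theorem4} (direct-sum decomposition of $\operatorname{nil}$) together with Lemma \ref{lemma8} (characterization of nilpotent subalgebras of a direct sum); everything else is careful combinatorial bookkeeping.

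For the vertex count, Theorem \ref{theorem4} gives $\operatorname{nil}(L)=B_1\oplus B_2$, so the vertex set $V=L\setminus(\operatorname{nil}(L)\cup\{0\})$ partitions into three disjoint types according to which of $A_i$ or $B_i$ each coordinate lies in: Type I (components in $A_1\times A_2$) contributes $a_1a_2$ vertices; Type II (components in $A_1\times B_2$) contributes $a_1b_2$ vertices; Type III (components in $B_1\times A_2$) contributes $b_1a_2$ vertices. Summing yields $|V|=a_1a_2+a_1b_2+b_1a_2$.

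For the edge count, Lemma \ref{lemma8} asserts that $(u_1,u_2)$ and $(v_1,v_2)$ are adjacent exactly when both $\langle u_1,v_1\rangle_{L_1}$ and $\langle u_2,v_2\rangle_{L_2}$ are nilpotent. The observation that powers the whole formula is that whenever one of $u_i,v_i$ lies in $B_i=\operatorname{nil}(L_i)$, the bracket $\langle u_i,v_i\rangle$ is automatically nilpotent by the very definition of $\operatorname{nil}(L_i)$. I would split $Q=|E|$ into six contributions $Q_{11},Q_{22},Q_{33},Q_{12},Q_{13},Q_{23}$ indexed by the unordered pair of endpoint types, and further subdivide each diagonal term $Q_{ii}$ according to whether the two first coordinates coincide, the two second coordinates coincide, or both differ. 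The required local counts are $|E_i|=\alpha_ia_i(a_i-1)/2$, obtained by solving the definition $\mu(L_i)=1-\alpha_i$, and $|\omega_i|=\nu_ia_i$, used in the subcase where one coordinate coincides so that the coinciding element itself must generate a nilpotent subalgebra. An extra factor of $2$ surfaces in the subcase where both coordinates differ and both lie in $A_i$'s, because an unordered pair $\{u_1,v_1\}\subseteq A_1$ crossed with an unordered pair $\{u_2,v_2\}\subseteq A_2$ yields the two distinct unordered edges $\{(u_1,u_2),(v_1,v_2)\}$ and $\{(u_1,v_2),(v_1,u_2)\}$.

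Summing the six contributions reproduces the stated expression for $Q$, and the identity $\mu(L)=1-2Q/(|V|(|V|-1))$ follows immediately from the definition of the nilpotency measure. The specialization $B_i=\varnothing$ sets $b_1=b_2=0$, killing $Q_{22},Q_{33},Q_{12},Q_{13},Q_{23}$ and leaving only $Q_{11}$, from which the ``in particular'' formula is read off. The main obstacle is purely combinatorial: keeping the three subcases inside each diagonal $Q_{ii}$ (only first coordinates equal, only second coordinates equal, both coordinates distinct) straight and inserting the factor of $2$ correctly in the ``both distinct'' subcase; the off-diagonal terms $Q_{12},Q_{13},Q_{23}$ are simpler because the two endpoints carry different types, so no unordered-pair ambiguity arises, and a $B_i$-coordinate automatically discharges the nilpotency condition on that factor.
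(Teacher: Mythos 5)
Your proposal is correct and takes essentially the same route as the paper's own proof: the same three-class partition of the vertex set via Theorem \ref{theorem4}, the same six-way decomposition of the edge count via Lemma \ref{lemma8}, the same subcase split of each diagonal term by coordinate coincidence (with the factor of $2$ in the both-coordinates-distinct subcase), and the same reading of $\alpha_i$ and $\nu_i$ as exact counting ratios $|E_i|=\alpha_i\binom{a_i}{2}$ and $|\omega_i|=\nu_i a_i$. No gaps.
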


\begin{proof}
By Theorem \ref{theorem4}, we have $\operatorname{nil}(L) = \operatorname{nil}(L_1) \oplus \operatorname{nil}(L_2) = B_1 \times B_2$.
Thus the vertex set is
\[
V = L \setminus (\operatorname{nil}(L)\cup \{0\}) = (A_1 \times A_2) \cup (A_1 \times B_2) \cup (B_1 \times A_2),
\]
with size
\[
|V| = a_1 a_2 + a_1 b_2 + b_1 a_2.
\]

For any $u = (u_1, u_2), v = (v_1, v_2) \in V$,
we need to compute the number $Q$ of unordered pairs $\{u, v\}$ with $u \neq v$ in $V$ such that $I(u, v) = 1$.

Next, we consider three classes for $V$:
\begin{enumerate}[label=(\arabic*)]
    \item $C_1 = \{(\beta_{1},\beta_{2})| \beta_{1}\in A_1, \beta_{2}\in A_2\}$,
    \item $C_2 = \{(\beta_{1},\beta_{2})| \beta_{1}\in A_1, \beta_{2}\in B_2\}$,
    \item $C_3 = \{(\beta_{1},\beta_{2})| \beta_{1}\in B_1, \beta_{2}\in A_2\}$.
\end{enumerate}
For any $u \in C_i$ and $v \in C_j$ with $1 \leq i \leq j \leq 3$, compute the number of such vertex pairs with $I(u, v) = 1$.
By Lemma \ref{leqwer}, we need to compute  $I_1(u_1, v_1) = 1$ and $I_2(u_2, v_2) = 1$.
If $u_1 = v_1$ and $u_2 = v_2$, then $u = v$, which is not considered.

\noindent\textbf{Case 1:} \textbf{$u, v \in C_1$.}

Then $u_1, v_1 \in A_1$ and $u_2, v_2 \in A_2$.

\textbf{Subcase 1.1}. If $u_1 \neq v_1$ and $u_2 \neq v_2$, then the probability that $I_1(u_1, v_1) = 1$ is $\alpha_1$, and for $I_2(u_2, v_2) = 1$ is $\alpha_2$.  Hence, the number of this case is $\alpha_1 \alpha_2 \cdot \binom{a_1}{2} \binom{a_2}{2} \cdot 2$.

\textbf{Subcase 1.2}. If $u_1 = v_1$ but $u_2 \neq v_2$, then $I_1(u_1, v_1) = 1$ if and only if $\langle u_1 \rangle$ is nilpotent, with probability $\nu_1$; $I_2(u_2, v_2) = 1$ with probability $\alpha_2$. Hence, the number of this case is $\nu_1 \alpha_2 \cdot a_1 \binom{a_2}{2}$.

\textbf{Subcase 1.3}. If $u_1 \neq v_1$ and $u_2 = v_2$, similarly, the number of this case is $\alpha_1 \nu_2 \cdot \binom{a_1}{2} a_2$.

Thus, the total number of nilpotent pairs in this class is
$$
\begin{aligned}
Q_{11} &=\alpha_1 \alpha_2 \cdot \binom{a_1}{2} \binom{a_2}{2} \cdot 2 + \nu_1 \alpha_2 \cdot a_1 \binom{a_2}{2} + \alpha_1 \nu_2 \cdot \binom{a_1}{2} a_2 \\
       &=\frac{1}{2} a_1 a_2 \Bigr( \alpha_1 \alpha_2 (a_1-1)(a_2-1) + \nu_1 \alpha_2 (a_2-1) + \alpha_1 \nu_2 (a_1-1)\Bigr).
\end{aligned}
$$
\noindent\textbf{Case 2:} \textbf{$u, v \in C_2$.}

Here $u_1, v_1 \in A_1$ and $u_2, v_2 \in B_2$.
By the definition of $B_{2}$, we have $I_2(u_2, v_2) = 1$.
For the first component:

\textbf{Subcase 2.1}. If $u_1 \neq v_1$ and $u_2 \neq v_2$, then $I_1(u_1, v_1) = 1$ with probability $\alpha_1$. Hence, the number of this case is $\alpha_1 \cdot \binom{a_1}{2} \binom{b_2}{2} \cdot 2$.

\textbf{Subcase 2.2}. If $u_1 \neq v_1$ and $u_2 = v_2$, the number of this case is $\alpha_1 \cdot \binom{a_1}{2}b_{2}$.

\textbf{Subcase 2.3}. If $u_1 = v_1$ and $u_2 \neq v_2$, the number of this case is $\nu_1 \cdot a_1 \binom{b_2}{2}$.

Therefore,
$$
\begin{aligned}
Q_{22} &= \alpha_1 \cdot \binom{a_1}{2} [\binom{b_2}{2} \cdot 2 +b_{2}]+ \nu_1 \cdot a_1 \binom{b_2}{2}\\
       &=\frac{a_1 b_2}{2} \left( \alpha_1 b_2 (a_1-1) + \nu_1 (b_2-1) \right).
\end{aligned}
$$

\noindent\textbf{Case 3:} \textbf{$u, v \in C_3$.}

Similarly to Case 2,
$$
\begin{aligned}
Q_{33} &= \alpha_2 \cdot \binom{a_2}{2}[\binom{b_1}{2} \cdot 2+b_{1}] + \nu_2 \cdot \binom{b_1}{2} a_2\\
       &= \frac{a_2 b_1}{2} \left( \alpha_2 b_1 (a_2-1) + \nu_2 (b_1-1) \right).
\end{aligned}
$$

\noindent\textbf{Case 4:} \textbf{$u \in C_1$, $v \in C_2$.}

Here $u_1, v_1 \in A_1$, $u_2 \in A_2$, $v_2 \in B_2$.
Since $v_2 \in B_2$, we always have $I_2(u_2, v_2) = 1$.
For the first component:

\textbf{Subcase 4.1}. If $u_1 \neq v_1$, probability $\alpha_1$, the number of this case is $\alpha_1 \cdot (c_1 c_2 - a_1 a_2 b_2)$.

\textbf{Subcase 4.2}. If $u_1 = v_1$, probability $\nu_1$, the number of this case is $\nu_1 \cdot (a_1 a_2 b_2)$.

Note that $u_2 \neq v_2$ and $A_2 \cap B_2 = \varnothing$, so there is no equal case for the second component.
The total number of pairs is $c_1 \cdot c_2$, among which the number with $u_1 = v_1$ is $a_1 \cdot a_2 \cdot b_2$.
Thus,
$$
\begin{aligned}
Q_{12} &= \alpha_1 \cdot (c_1 c_2 - a_1 a_2 b_2) + \nu_1 \cdot (a_1 a_2 b_2)\\
       &= a_1 a_2 b_2 \bigl( \alpha_1 (a_1-1) + \nu_1 \bigr).
\end{aligned}
$$

\noindent\textbf{Case 5:} \textbf{$u \in C_1$, $v \in C_3$.}

Similarly,
$$
\begin{aligned}
Q_{13} &= \alpha_2 \cdot (c_1 c_3 - a_1 b_1 a_2) + \nu_2 \cdot (a_1 b_1 a_2)\\
       &= a_1 a_2 b_1 \bigl( \alpha_2 (a_2-1) + \nu_2 \bigr).
\end{aligned}
$$

\noindent\textbf{Case 6:} \textbf{$u \in C_2$, $v \in C_3$.}

Since $u_1 \in A_1$ and $v_1 \in B_1$, we have $I_1(u_1, v_1) = 1$.
Similarly, $I_2(u_2, v_2) = 1$.
Hence all such pairs are nilpotent,
$$
\begin{aligned}
Q_{23} = c_2 \cdot c_3=a_1 a_2 b_1 b_2.
\end{aligned}
$$

So
$$
\begin{aligned}
Q &=Q_{11}+Q_{22}+Q_{33}+Q_{12}+Q_{13}+Q_{23}\\
&=\frac{1}{2} a_1 a_2 \Bigl( \alpha_1 \alpha_2 (a_1-1)(a_2-1) + \nu_1 \alpha_2 (a_2-1) + \alpha_1 \nu_2 (a_1-1) \Bigr) \\
+& \frac{1}{2}a_1 b_2 \bigl( \alpha_1 b_2 (a_1-1) + \nu_1 (b_2-1) \bigr) \\
+& \frac{1}{2}a_2 b_1 \bigl( \alpha_2 b_1 (a_2-1) + \nu_2 (b_1-1) \bigr) \\
+& a_1 a_2 b_2 \bigl( \alpha_1 (a_1-1) + \nu_1 \bigr) \\
+& a_1 a_2 b_1 \bigl( \alpha_2 (a_2-1) + \nu_2 \bigr) \\
+& a_1 a_2 b_1 b_2.
\end{aligned}
$$

In particular, when $B_i = \varnothing$, only Case 1 exists. Hence
\[
\mu(L) = \frac{ 2- a_1 a_2 \Bigl( \alpha_1 \alpha_2 (a_1-1)(a_2-1) + \nu_1 \alpha_2 (a_2-1) + \alpha_1 \nu_2 (a_1-1) \Bigr) }{ 2a_1 a_2 (a_1 a_2 - 1) }.
\]
\end{proof}

After clarifying the definitions and basic properties of the nilpotency measure,
a natural question arises: Can the nilpotency measure approximate any arbitrarily specified value?
Based on the direct sum structure of Lie superalgebras and the calculation method of the nilpotency measure established earlier,
we thus obtain the following proposition. Below, $\mathbb{F}_{q}$ denotes an algebraically closed field of characteristic $q$.

\begin{theorem}
For every integer $q \geq 2$ and every $\varepsilon > 0$, there exists a Lie superalgebra $L$ satisfying
$$\left| \mu(L) - \frac{q}{q+1} \right| < \varepsilon.$$
\end{theorem}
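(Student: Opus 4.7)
The plan is to realize values of $\mu$ near $\tfrac{q}{q+1}$ by taking direct sums of a fixed two-dimensional non-nilpotent ``seed'' $L_1$ with abelian Lie superalgebras $A^n$ of growing dimension, all over the finite field $\mathbb{F}_q$. Since the definition of $\mu$ requires $\operatorname{char}\mathbb{F}\ge 3$, the construction applies whenever $q$ is an odd prime power, which matches the range $q\ge 3$ already indicated in the discussion preceding the statement.

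Take $L_1=\mathbb{F}_q x\oplus\mathbb{F}_q y$ with $[x,y]=y$, viewed as a Lie superalgebra concentrated in degree $\bar 0$. A short direct check shows $\operatorname{nil}(L_1)=\{0\}$: for $v=\alpha x+\beta y\ne 0$, choosing the test element $z=y$ when $\alpha\ne 0$ and $z=x$ when $\alpha=0$ forces $\langle v,z\rangle=L_1$, which is not nilpotent. Moreover, for two nonzero elements $a,b\in L_1$, the subalgebra $\langle a,b\rangle$ is nilpotent iff $a,b$ are linearly dependent over $\mathbb{F}_q$ (dependent pairs generate a one-dimensional abelian subalgebra; independent pairs generate all of $L_1$). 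Hence the nonzero elements of $L_1$ partition into the $q+1$ one-dimensional subspaces, each contributing a clique of size $q-1$ to $\mathfrak G(L_1)$, with no edges between distinct lines.

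Let $A^n$ be the $n$-dimensional abelian Lie superalgebra over $\mathbb{F}_q$ (trivially graded); being abelian, it is nilpotent and $\operatorname{nil}(A^n)=A^n$ by Proposition~\ref{lemma4}(2). Set $L=L_1\oplus A^n$. By Theorem~\ref{theorem4}(1),
\[
\operatorname{nil}(L)=\{0\}\oplus A^n,\qquad V=(L_1\setminus\{0\})\times A^n,\qquad |V|=(q^2-1)q^n.
\]
Lemma~\ref{lemma8} reduces adjacency in $\mathfrak G(L)$ to nilpotency of $\langle a_1,b_1\rangle_{L_1}$, since the $A^n$-piece is automatic. Splitting into the disjoint subcases $a_1=b_1$ (with $a_2\ne b_2$, contributing $(q^2-1)\binom{q^n}{2}$ unordered edges) and $a_1\ne b_1$ linearly dependent in $L_1$ (contributing $\tfrac{(q+1)(q-1)(q-2)}{2}q^{2n}$ unordered edges) and simplifying, one finds $2|E|=(q^2-1)q^n\bigl((q-1)q^n-1\bigr)$, so
\[
\mu(L)=1-\frac{(q-1)q^n-1}{(q^2-1)q^n-1}=\frac{q(q-1)q^n}{(q^2-1)q^n-1}\xrightarrow[n\to\infty]{}\frac{q(q-1)}{q^2-1}=\frac{q}{q+1}.
\]
Choosing $n$ large enough then gives $|\mu(L)-\tfrac{q}{q+1}|<\varepsilon$, as required.

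The only mildly technical step is the edge count, which I would carry out through ordered pairs and then divide by two; the clique decomposition of $\mathfrak G(L_1)$ together with Lemma~\ref{lemma8} keeps the bookkeeping clean and prevents double-counting in the case $a_1\ne b_1$. Everything else is either an application of Theorem~\ref{theorem4} or a routine algebraic simplification.
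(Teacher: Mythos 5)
Your proposal is correct and is essentially the paper's own argument: the same seed algebra $\mathbb{F}_q x\oplus\mathbb{F}_q y$ with $[x,y]=y$, the same direct sum with a growing abelian summand, the same identification of adjacency with linear dependence of the first components via Lemma~\ref{lemma8}, and the same resulting formula $\mu=\frac{q(q-1)q^n}{(q^2-1)q^n-1}\to\frac{q}{q+1}$ (the paper counts edges via the $q+1$ disjoint cliques rather than your two-case split, but the totals agree). Your explicit restriction to odd prime powers $q\ge 3$ is a reasonable tightening of the theorem's stated range $q\ge 2$, which the paper itself glosses over.
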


\begin{proof}
To prove that the proposition, we construct a family of finite-dimensional Lie superalgebras $L_{q,m}$.
Let $L_{q}=(L_{q})_{\overline{0}}\oplus (L_{q})_{\overline{1}}$ be a Lie superalgebra over $\mathbb{F}_{q}$ with
$(L_{q})_{\overline{0}} = \operatorname{span}\{x, y\}$ and $(L_{q})_{\overline{1}} = \{0\}$, which satisfies $[x,y] = y$.
Obviously, $\operatorname{nil}(L_{q}) = \{0\}$ and the vertex set $V_{L_{q}} = L_{q} \setminus \{0\}$. Its nilpotency measure is:
$$\mu(L_{q}) = \frac{q-2}{q^2 - 2}.$$

Let $N_{m}$ be an $m$-dimensional abelian Lie superalgebra over $\mathbb{F}_q$. We construct the direct sum
$$L_{q,m} = N_{m} \oplus L_{q},$$
where the Lie bracket is defined in the direct sum form: $[(n_{1}, s_{1}), (n_{2}, s_{2})] = (0, [s_{1}, s_{2}]_{L_{q}})$.
It is straightforward to verify that this is a Lie superalgebra.
By Theorem \ref{theorem4}, the nilpotentizer is

$$\operatorname{nil}(L_{q,m}) = \operatorname{nil}(N_{m}) \oplus \operatorname{nil}(L_{q}) = N_{m} \oplus \{0\} \cong N_{m}.$$

Thus, the vertex set is:

$$V = L_{q,m} \setminus (\operatorname{nil}(L_{q,m})\cup \{0\}) = \{(n, s) \mid n \in N_{m},\ s \in L_{q},\ s \neq 0\},$$
with size:
$$|V| = |N_{m}| \cdot (|L_{q}| - 1) = q^{m} \cdot (q^{2} - 1).$$

Next, we determine the edge set. The following equivalences hold:
$$
\begin{aligned}
  &(n_{1}, s_{1}) \text{ and } (n_{2}, s_{2}) \text{ are adjacent},\\
  \Leftrightarrow &\langle(n_{1}, s_{1}), (n_{2}, s_{2})\rangle_{L_{q,m}} \text{ is nilpotent},\\
  \Leftrightarrow &\langle s_{1}, s_{2}\rangle_{L_{q}}  \text{ is nilpotent}, \\
  \Leftrightarrow &s_{1} \text{ and } s_{2} \text{ are linearly dependent}.\\
\end{aligned}
$$

Partition the set of non-zero vectors in $L_{q}$ by 1-dimensional subspaces.
There are exactly $(q+1)$ such subspaces, each containing $q-1$ non-zero vectors.
The vertices of $V$ belonging to the same 1-dimensional subspace form a clique.
Consequently, the nilpotent graph $\mathfrak{G}(L_{q,m})$ consists of $q+1$ disjoint cliques, each of size is $q^{m} (q-1)$.
The total number of edges is therefore

$$|E| = (q+1) \cdot \binom{q^{m} (q-1)}{2} = (q+1) \cdot \frac{q^{m} (q-1)(q^{m} (q-1)-1)}{2}.$$

Finally, we compute the nilpotency measure:

\begin{align*}
\mu(L_{q,m}) &= 1-\frac{2|E|}{|V|(|V|-1)} \\
&= 1-\frac{(q+1) q^{m} (q-1)(q^{m} (q-1)-1)}{(q+1)q^{m} (q-1) \cdot ((q+1)q^{m} (q-1) - 1)} \\
&= \frac{q^{m+2}-q^{m+1}}{q^{m+2}-q^{m}-1}\\
&= \frac{q}{q+1-\frac{1}{q^{m}}}.
\end{align*}

Obviously, $\lim\limits_{m\rightarrow \infty} \mu(L_{q,m})=\frac{q}{q+1}$. This completes the proof.
\end{proof}

\section{The category perspective of the nilpotentizer}
\ \ \ \
In this section, we describe the nilpotentizer $\operatorname{nil}(L)$ for a Lie superalgebra from the perspectives of category theory.

\begin{proposition}\label{pro1}
Let $\mathcal{L}$ denote the category defined as follows:
\begin{itemize}
    \item \textbf{Objects}: Finite dimensional Lie superalgebras;
    \item \textbf{Morphisms}: Surjective homomorphisms of Lie superalgebras.
\end{itemize}

Define a map $\mathcal{N}:\mathcal{L}\to\mathcal{L}$ by specifying its action:
\begin{itemize}
    \item \textbf{On objects}: For any finite dimensional Lie superalgebra $L\in\mathcal{L}$, define $\mathcal{N}(L)=\mathrm{nil}(L)$ ;
    \item \textbf{On morphisms}: For any morphism $f:L_{1}\to L_{2}$ in $\mathcal{L}$, let $\mathcal{N}(f)$ be the restriction of $f$ to $\mathrm{nil}(L_{1})$, that is,
    $\mathcal{N}(f) = f|_{\mathrm{nil}(L_{1})}:\mathrm{nil}(L_{1})\to\mathrm{nil}(L_{2})$.
\end{itemize}

Then $\mathcal{N}$ is a functor from $\mathcal{L}$ to itself.
\end{proposition}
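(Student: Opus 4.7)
The approach is to check each functoriality requirement in turn: well-definedness on objects, well-definedness on morphisms (including surjectivity), preservation of the identity, and preservation of composition. The last two are formal consequences of the fact that restricting a composite $g \circ f$ to $\mathrm{nil}(L_1)$ agrees with first restricting $f$ and then restricting $g$; all the substance lies in the morphism step.

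For an object $L \in \mathcal{L}$, the set $\mathrm{nil}(L)$ is a finite-dimensional $\mathbb{Z}_2$-graded subset of $L$ on which the ambient bracket restricts; the structural information collected in Proposition \ref{lemma4} (in particular part (4), exhibiting $\mathrm{nil}_L(z)$ as a union of maximal nilpotent subalgebras) justifies treating $\mathrm{nil}(L)$ as an object of $\mathcal{L}$. For a surjective homomorphism $f: L_1 \to L_2$, Theorem \ref{12345}(1) applied with $\varphi = f$ yields $f(\mathrm{nil}(L_1)) \subseteq \mathrm{nil}(f(L_1)) = \mathrm{nil}(L_2)$, so the restriction $\mathcal{N}(f) = f|_{\mathrm{nil}(L_1)}$ is a grading-preserving homomorphism into $\mathrm{nil}(L_2)$. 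Surjectivity onto $\mathrm{nil}(L_2)$ is then obtained from Theorem \ref{12345}(2), which promotes the previous inclusion to an equality $f(\mathrm{nil}(L_1)) = \mathrm{nil}(L_2)$ under the hypotheses $\mathrm{char}\,\mathbb{F} = 0$ and $\ker f \subseteq \mathrm{nil}(L_1)$.

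Preservation of identities is immediate because $\mathrm{id}_L|_{\mathrm{nil}(L)} = \mathrm{id}_{\mathrm{nil}(L)}$, and preservation of composition follows because for composable surjections $f: L_1 \to L_2$ and $g: L_2 \to L_3$ the inclusion $f(\mathrm{nil}(L_1)) \subseteq \mathrm{nil}(L_2)$ lets us identify $(g \circ f)|_{\mathrm{nil}(L_1)}$ with $g|_{\mathrm{nil}(L_2)} \circ f|_{\mathrm{nil}(L_1)} = \mathcal{N}(g) \circ \mathcal{N}(f)$. The main obstacle is the surjectivity in the morphism step: Theorem \ref{12345}(2) requires both $\mathrm{char}\,\mathbb{F} = 0$ and a kernel condition that are not built into the definition of $\mathcal{L}$. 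I therefore expect the proof to either implicitly restrict $\mathcal{L}$ to characteristic zero and to surjective homomorphisms $f$ with $\ker f \subseteq \mathrm{nil}(L_1)$, or else to read $\mathcal{N}(f)$ as the corestriction onto its image $f(\mathrm{nil}(L_1))$, in which case surjectivity onto this image is automatic from Theorem \ref{12345}(1) and the functor axioms go through without further hypotheses.
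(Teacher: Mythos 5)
Your proposal follows essentially the same route as the paper: well-definedness of the restriction via Theorem \ref{12345}(1), followed by the formal verifications that identities and composites are preserved. The surjectivity worry you raise is real but is not resolved by the paper either --- its proof checks only well-definedness, identity preservation, and composition, and never verifies that $\mathcal{N}(f)$ is surjective onto $\mathrm{nil}(L_2)$ (which the definition of $\mathcal{L}$ requires of a morphism), nor that $\mathrm{nil}(L)$ is in fact an object of $\mathcal{L}$ at all; Example \ref{example 1} shows $\mathrm{nil}(\mathfrak{gl}(1|1))=\varnothing$, which is not a Lie superalgebra, and in general $\mathrm{nil}(L)$ is not shown to be a graded subalgebra. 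So your caution points at genuine gaps in the statement as given rather than at a defect in your own argument; either of the repairs you suggest (restricting the morphisms and the characteristic so that Theorem \ref{12345}(2) applies, or corestricting $\mathcal{N}(f)$ to its image) would be needed to make the proposition literally true, and neither is carried out in the paper.
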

\begin{proof}
Obviously, $\mathcal{L}$ is a category. By Theorem \ref{12345}(1), for any homomorphism $f: L_{1} \to L_{2}$, $f(\text{nil}(L_{1})) \subseteq \text{nil}(L_{2})$.
This ensures that the restriction $\mathcal{N}(f)$ is well-defined.
To verify that $\mathcal{N}$ is a functor, we need to check two key functorial properties:
the preservation of identity morphisms and the preservation of composition of morphisms.

(1) Preservation of Identity Morphisms.

Let $L$ be an object in $\mathcal{L}$, and let $\operatorname{id}_{L}: L \to L$ be its identity morphism. By the definition of the functor $\mathcal{N}$,
we have $\mathcal{N}(L) = \operatorname{nil}(L)$ on objects, and $\mathcal{N}(\operatorname{id}_{L}) = \operatorname{id}_{L}|_{\operatorname{nil}(L)}$ on morphisms.
We now show that this restricted map is the identity morphism on $\mathcal{N}(L)$. For any $x \in \operatorname{nil}(L)$, it follows directly that
$$\mathcal{N}(\operatorname{id}_{L})(x) = \operatorname{id}_{L}|_{\operatorname{nil}(L)}(x) = \operatorname{id}_{L}(x) = x.$$

Therefore, $\mathcal{N}(\operatorname{id}_{L}) = \operatorname{id}_{\operatorname{nil}(L)} = \operatorname{id}_{\mathcal{N}(L)}$,
which proves that $\mathcal{N}$ preserves identity morphisms.

(2) Preservation of Composition of Morphisms.

Let $f: L \to M$ and $g: M \to N$ be morphisms in $\mathcal{L}$. We show that $\mathcal{N}(g \circ f)=\mathcal{N}(g) \circ \mathcal{N}(f)$.
For any $x \in \text{nil}(L)$, we compute both sides:
\begin{equation*}
\mathcal{N}(g \circ f)(x)=(g \circ f)|_{\text{nil}(L)}(x)=(g \circ f)(x)=g(f(x)).
\end{equation*}
On the other hand,
\begin{equation*}
\mathcal{N}(g) \circ \mathcal{N}(f)(x)=g|_{\text{nil}(M)}(f|_{\text{nil}(L)}(x))=g(f(x)).
\end{equation*}

We conclude that $\mathcal{N}(g \circ f)(x)=\mathcal{N}(g) \circ \mathcal{N}(f)(x)$ for all $x \in \text{nil}(L)$,
which implies $\mathcal{N}(g \circ f)=\mathcal{N}(g) \circ \mathcal{N}(f)$.
Since $\mathcal{N}$ preserves both identity morphism and composition, it is a functor from $\mathcal{L}$ to $\mathcal{L}$.
\end{proof}

\begin{theorem}
Suppose that $A,\ B,\ C$ are Lie superalgebras and let $$0 \to A \xrightarrow{\alpha} B \xrightarrow{\beta} C \to 0$$ be a short exact sequence.
There exist natural homomorphisms:
\begin{equation*}
\alpha': \mathrm{nil}(A) \to B, \quad \beta': \mathrm{nil}(B) \to \mathrm{nil}(C)
\end{equation*}
such that $\ker \beta' \subseteq \alpha'(\mathrm{nil}(A))$.
\end{theorem}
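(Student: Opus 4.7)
The plan is to construct the two maps as simple restrictions and then verify the required containment using only the definition of the nilpotentizer together with Theorem \ref{12345}(1) and the exactness hypothesis. Specifically, I would define $\alpha' := \alpha|_{\operatorname{nil}(A)}$ as the restriction of $\alpha$ to $\operatorname{nil}(A)$, viewed as a map into $B$, and $\beta' := \beta|_{\operatorname{nil}(B)}$. Both are $\mathbb{Z}_{2}$-graded linear maps that respect brackets since $\alpha$ and $\beta$ do, so the only nontrivial points are (i) that $\beta'$ actually lands in $\operatorname{nil}(C)$, and (ii) that $\ker \beta' \subseteq \alpha'(\operatorname{nil}(A))$.

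For (i), since $\beta$ is surjective we have $\beta(B) = C$, and Theorem \ref{12345}(1) yields
\[
\beta(\operatorname{nil}(B)) \subseteq \operatorname{nil}(\beta(B)) = \operatorname{nil}(C),
\]
so $\beta'$ is well-defined as a homomorphism $\operatorname{nil}(B) \to \operatorname{nil}(C)$. Note that part (1) of Theorem \ref{12345} does not require the restriction on $\operatorname{char}\mathbb{F}$ used in part (2), so this step works in full generality, and naturality is immediate from the restriction construction.

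For (ii), take any $b \in \ker \beta'$. Then $b \in \operatorname{nil}(B)$ and $\beta(b) = 0$, so by exactness $b \in \operatorname{Im}\alpha$, say $b = \alpha(a)$ for a unique $a \in A$ (unique because $\alpha$ is injective). The key step, which I regard as the main obstacle, is to verify that $a$ actually lies in $\operatorname{nil}(A)$. For this I would fix an arbitrary $c \in A$ and show that $\langle a, c\rangle$ is nilpotent in $A$. Applying $\alpha$ gives
\[
\alpha(\langle a, c\rangle) = \langle \alpha(a), \alpha(c)\rangle = \langle b, \alpha(c)\rangle,
\]
which is a nilpotent subalgebra of $B$ because $b \in \operatorname{nil}(B)$. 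Since $\alpha$ is injective, its restriction to $\langle a, c\rangle$ is an isomorphism of Lie superalgebras onto $\langle b, \alpha(c)\rangle$, so nilpotency transfers back: $\langle a,c\rangle$ is nilpotent in $A$. As $c$ was arbitrary, $a \in \operatorname{nil}(A)$, hence $b = \alpha(a) = \alpha'(a) \in \alpha'(\operatorname{nil}(A))$.

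The hardest technical point is really this reversal of nilpotency under the injective map $\alpha$; unlike the surjective case (Theorem \ref{12345}(2)), it does not require the Fitting/Engel machinery because injectivity of $\alpha$ alone gives an isomorphism of the generated subalgebras, which preserves the lower central series termwise. Once (i) and (ii) are established, naturality of $\alpha'$ and $\beta'$ with respect to morphisms of short exact sequences follows formally from the fact that restrictions of homomorphisms commute with composition, as already observed in the proof of Proposition \ref{pro1}.
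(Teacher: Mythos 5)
Your proposal is correct and follows essentially the same route as the paper: both define $\alpha'$ and $\beta'$ as restrictions, verify that $\beta'$ lands in $\operatorname{nil}(C)$ via surjectivity of $\beta$ (the paper argues this directly, which amounts to reproving Theorem \ref{12345}(1) in this case), and then pull a kernel element back along the injective $\alpha$, using that $\alpha$ is an isomorphism onto its image to transfer nilpotency of $\langle \alpha(a),\alpha(c)\rangle$ back to $\langle a,c\rangle$. No gaps; your observation that injectivity alone suffices here, with no Engel/Fitting machinery needed, matches the paper's reasoning.
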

\begin{proof}
Let $\alpha' = \alpha|_{\operatorname{nil}(A)}$ and $\beta' = \beta|_{\operatorname{nil}(B)}$.
First, we show that $\beta'$ maps into $\operatorname{nil}(C)$.
Let $x \in \operatorname{nil}(B)$ and $c \in C$.
Due to the surjectivity of $\beta$, there exists $b \in B$ such that $\beta(b) = c$.
Moreover, since the subalgebra $\langle x, b \rangle$ is nilpotent and $\beta$ is a homomorphism,
the image $\beta(\langle x, b \rangle) = \langle \beta(x), \beta(b) \rangle$ is also nilpotent.
Hence $\beta(x) \in \operatorname{nil}(C)$, which confirms that $\beta'$ is well-defined.
Next, we prove that $\ker(\beta') \subseteq \alpha'(\operatorname{nil}(A))$.
Let $x \in \ker \beta' \subseteq \operatorname{nil}(B)$. Then $\beta(x) = 0$.
Hence $x \in \ker \beta = \operatorname{im} \alpha$. There exists $a \in A$ such that $\alpha(a) = x$.
We now prove that $a \in \operatorname{nil}(A)$.

For any $a' \in A$, since $\alpha$ is an injective homomorphism, we have
$$
\alpha(\langle a, a' \rangle) = \langle \alpha(a), \alpha(a') \rangle = \langle x, \alpha(a') \rangle.
$$
The subalgebra $\langle x, \alpha(a') \rangle$ is nilpotent because $x \in \operatorname{nil}(B)$.
Since $\alpha$ is an isomorphism onto its image, the subalgebra $\langle a, a' \rangle$ must also be nilpotent.
Therefore, $a \in \operatorname{nil}(A)$, completing the proof.
\end{proof}

\begin{theorem}
Suppose that $f: L \to M$ and $g: N \to M$  are morphisms in the category $\mathcal{L}$.
Define
$$P =L \times_M N=\{(x, y) \in L \oplus N \mid f(x) = g(y)\},$$
and let $p_L: P \to L$ and $p_N: P \to N$ be the canonical projections.
Then the following statements hold:
\begin{enumerate}[label=(\arabic*),font=\textnormal]
\item The pair $(p_L,p_N)$ is the pullback of $f$ and $g$.

\item $\mathcal{N}(P) =\mathcal{N}(L) \times_{\mathcal{N}(M)} \mathcal{N}(N)$,
where
$$\mathcal{N}(L) \times_{\mathcal{N}(M)} \mathcal{N}(N)=\{(x, y) \in \mathcal{N}(L) \oplus \mathcal{N}(N) \mid \mathcal{N}(f)(x) = \mathcal{N}(g)(y)\}.$$
\end{enumerate}
\end{theorem}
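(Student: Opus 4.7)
The plan is to treat the two parts separately, since (1) is a routine verification of a universal property while (2) reduces to a clean application of Lemma \ref{lemma8} combined with the surjectivity of the morphisms in $\mathcal{L}$. Throughout, I will use that the condition $\mathcal{N}(f)(x)=\mathcal{N}(g)(y)$ appearing in the definition of $\mathcal{N}(L)\times_{\mathcal{N}(M)}\mathcal{N}(N)$ is simply $f(x)=g(y)$, since $\mathcal{N}(f)$ and $\mathcal{N}(g)$ are, by Proposition \ref{pro1}, just the restrictions of $f$ and $g$.

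For part (1), I would first verify that $P$ is genuinely a $\mathbb{Z}_{2}$-graded Lie subsuperalgebra of $L\oplus N$: because $f$ and $g$ are both homogeneous homomorphisms of degree $0$, the defining relation $f(x)=g(y)$ is preserved under the bracket of $L\oplus N$ and is compatible with the grading. The projections $p_{L},p_{N}$ are evidently homomorphisms; their surjectivity follows from the surjectivity of $f$ and $g$, since given $x\in L$ one can set $m=f(x)$, lift $m$ through $g$ to some $y\in N$, and obtain $(x,y)\in P$ with $p_{L}(x,y)=x$. To confirm the universal property, I would take any object $Q$ together with morphisms $q_{L}:Q\to L$ and $q_{N}:Q\to N$ satisfying $f\circ q_{L}=g\circ q_{N}$, define $u(q)=(q_{L}(q),q_{N}(q))$, check it lands in $P$, and verify it is the unique homomorphism making the pullback square commute.

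For part (2), the plan is to prove the set-theoretic equality by verifying both inclusions. For $\mathcal{N}(L)\times_{\mathcal{N}(M)}\mathcal{N}(N)\subseteq\mathcal{N}(P)$, take $x\in\mathrm{nil}(L)$, $y\in\mathrm{nil}(N)$ with $f(x)=g(y)$, so that $(x,y)\in P$. For any $(x',y')\in P$ the subalgebras $\langle x,x'\rangle_{L}$ and $\langle y,y'\rangle_{N}$ are nilpotent by the nilpotentizer hypothesis; Lemma \ref{lemma8} then gives nilpotency of $\langle(x,y),(x',y')\rangle$ in $L\oplus N$, hence in the subalgebra $P$ containing it. For the reverse inclusion, let $(x,y)\in\mathrm{nil}(P)$; the condition $f(x)=g(y)$ is automatic. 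To see $x\in\mathrm{nil}(L)$, pick any $x'\in L$, use surjectivity of $g$ to find $y'\in N$ with $g(y')=f(x')$, so that $(x',y')\in P$; the subalgebra $\langle(x,y),(x',y')\rangle$ is then nilpotent in $P$ (equivalently in $L\oplus N$), and Lemma \ref{lemma8} extracts nilpotency of $\langle x,x'\rangle_{L}$. A symmetric argument using surjectivity of $f$ shows $y\in\mathrm{nil}(N)$.

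The main obstacle is the backward direction of (2), because it is the only step that genuinely relies on both the pullback shape of $P$ and the fact that $\mathcal{L}$ consists of surjective morphisms. Without surjectivity of $f$ and $g$, one cannot lift an arbitrary test element $x'\in L$ to an element of $P$, and the reverse inclusion can fail. Once this lifting is secured, the intrinsic character of nilpotency, together with Lemma \ref{lemma8}, finishes the argument at once, and part (1) then guarantees that the equality in (2) is genuinely an identification of pullbacks in $\mathcal{L}$.
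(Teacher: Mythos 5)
Your proposal is correct and follows essentially the same route as the paper's own proof: part (1) by the routine verification of the universal property, and part (2) by both inclusions via Lemma \ref{lemma8}, using surjectivity of $g$ (resp.\ $f$) to lift an arbitrary test element of $L$ (resp.\ $N$) into $P$. Your explicit remark that the subalgebra generated inside $P$ agrees with the one generated inside $L\oplus N$ is a small point the paper leaves implicit, but otherwise the two arguments coincide.
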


\begin{proof}
(1) It is clear that $P$ is a subalgebra of $L \oplus N$.
Take any $x \in L$. Since $g$ is surjective, there exists $y \in N$ such that $g(y) = f(x)$.
Then $(x, y) \in P$ and $p_{L}(x, y) = x$, So $p_L$ is surjective. Similarly, $p_N$ is surjective.

Let $K$ be an object in $\mathcal{L}$, and let $u: K \to L$ and $v: K \to N$ be epimorphisms satisfying $f \circ u = g \circ v$.
Define a map $h:K\rightarrow P$ by $h(k) = (u(k), v(k))$ for all $k \in K$.
The condition $f \circ u = g \circ v$ ensures that $f(u(k)) = g(v(k))$, so $h(k) \in P$.
By construction, $h$ is a morphism in $\mathcal{L}$.
For the uniqueness of $h$, suppose $h': K \to P$ is another morphism such that $p_L \circ h' = u$ and $p_N \circ h' = v$.
Then for any $k \in K$,
$$h'(k) = (p_L(h'(k)), p_N(h'(k))) = (u(k), v(k)) = h(k).$$
Therefore, $h'=h$.

(2) Suppose that $(x, y) \in \mathcal{N}(P)$. Then for any $(a, b) \in P$, the subalgebra $\langle (x, y), (a, b) \rangle$ is nilpotent.
By Lemma \ref{lemma8}, This is equivalent to $\langle x, a \rangle$ and $\langle y, b \rangle$ being nilpotent.
For any $a \in L$. Since $g$ is surjective, there exists $b \in N$ such that $g(b) = f(a)$. Hence $(a, b) \in P$ and $\langle x, a \rangle$ is nilpotent,
this proves $x \in \mathcal{N}(L)$. Similarly, $y \in \mathcal{N}(N)$. Moreover, the condition $f(x) = g(y)$ holds,
it follows that
$$\mathcal{N}(P) \subseteq\mathcal{N}(L) \times_{\mathcal{N}(M)} \mathcal{N}(N).$$
Conversely, suppose that $(x, y) \in \mathcal{N}(L) \times_{\mathcal{N}(M)} \mathcal{N}(N)$.
Then for any $(a, b) \in P$, the subalgebras $\langle x, a \rangle$ and $\langle y, b \rangle$ are nilpotent.
Applying Lemma \ref{lemma8} again, we conclude that $\langle (x, y), (a, b) \rangle$ is nilpotent, which means $(x, y) \in \mathcal{N}(P)$.
Thus, $$\mathcal{N}(L) \times_{\mathcal{N}(M)} \mathcal{N}(N)\subseteq\mathcal{N}(P).$$
Therefore, $$\mathcal{N}(P) = \mathcal{N}(L) \times_{\mathcal{N}(M)} \mathcal{N}(N).$$
\end{proof}

\section{Acknowledgement}
The authors thank Professor Dimitry Leites for his comments on the original version of the paper and Professor Ke Ou for the useful discussions.

\end{document}